\newtheorem{theorem}{Theorem}
\newtheorem*{theorem*}{Theorem}
\newtheorem{proposition}{Proposition}
\newtheorem{lemma}{Lemma}
\theoremstyle{remark}
\theoremstyle{definition}
\newcommand{\E}{\mathcal{E}}
\def\cn{\mathrm{cn}}
\def\sn{\mathrm{sn}}
\title{Poncelet Plectra:\\Harmonious Curves in Cosine Space}
\author{Daniel Jaud}
\address{Holzkirchen Gymnasium\\
Munich, Germany}
\email{daniel.jaud.phd@gmail.com}
\author{Dan Reznik}
\address{Data Science Consulting\\Rio de Janeiro, Brazil}
\email{dreznik@gmail.com}
\author{Ronaldo Garcia}
\address{Inst. of Math. \& Statistics,
Federal Univ. of Goiás,
Goiânia, Brazil}
\email{ragarcia@ufg.br}
\begin{document}

\maketitle

\begin{abstract}
It has been shown that the family of Poncelet N-gons in the confocal pair (elliptic billiard) conserves the sum of cosines of its internal angles. Curiously, this quantity is equal to the sum of cosines conserved by its affine image where the caustic is a circle. We show that furthermore, (i) when N=3, the cosine triples of both families sweep the same planar curve: an equilateral cubic resembling a plectrum (guitar pick). We also show that (ii) the family of triangles excentral to the confocal family conserves the same product of cosines as the one conserved by its affine image inscribed in a circle; and that (iii) cosine triples of both families sweep the same spherical curve. When the triple of log-cosines is considered, this curve becomes a planar, plectrum-shaped curve, rounder than the one swept by its parent confocal family.
    
\medskip 
\noindent\textbf{Keywords:} elliptic billiard, invariant, confocal, Poncelet's porism, closure, conservation, cubic curve, spherical curve.

\medskip 
\noindent \textbf{MSC} {51N20, 51M04, 65-05}
\end{abstract}

\section{Introduction}
%\label{sec:intro}
Poncelet's porism is a remarkable result in projective geometry. In its simplest\footnote{The full result admits that the polygon sides be tangent to up to $N$ separate conics in the linear pencil defined by the outer one and a member of the second set  \cite[PCT]{centina15}.} form, it states that if an N-gon can be found inscribed to a first conic with all sides tangent to a second one, then a 1d family of such N-gons exists, parametrized by a vertex anywhere on the first conic; see  \cref{fig:poncelet}. 

\begin{figure}
    \centering
    \includegraphics[width=.65\textwidth]{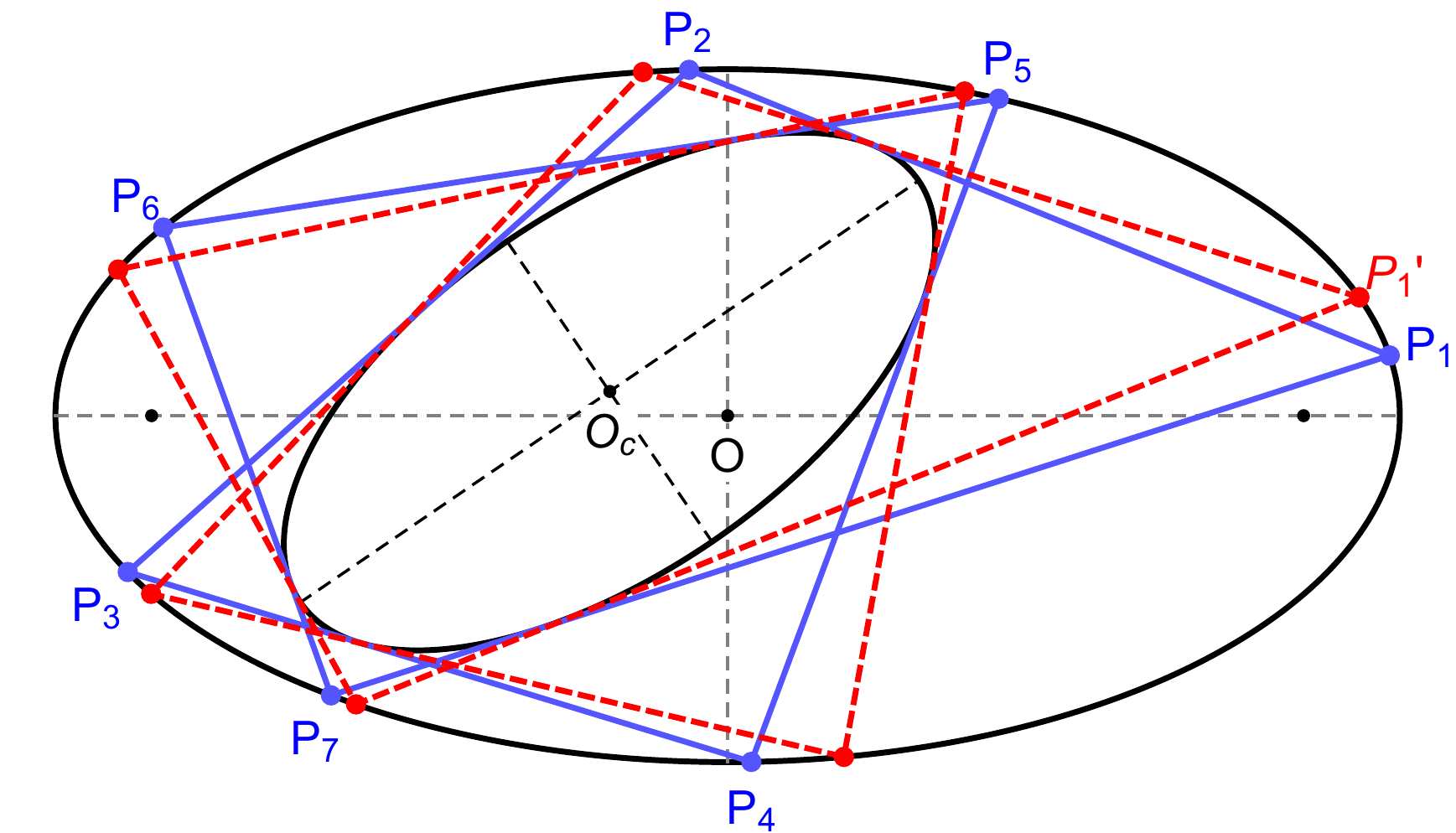}
    \caption{A self-intersected 7-gon (blue) is shown inscribed in an outer conic (center $O$) will all its sides tangent to an internal one (center $O_c$). Poncelet's porism states that a 1d family of such 7-gons exist, where one of the vertices (e.g., $P_1'$, corresponding to the dashed red polygon) can be any point of the outer conic. Videos:  \href{https://youtu.be/bjHpXVyXXVc}{N=3}, \href{https://youtu.be/kzxf7ZgJ5Hw}{N=7}}
    \label{fig:poncelet}
\end{figure}

Geometric details as well as fascinating historical notes on this result are covered in \cite{bos1987,centina15,rozikov2018}. Some introductory textbooks on this topic are \cite{rozikov2018,sergei91}. More advanced, algebro-geometric treatments include \cite{dragovic11,flatto2009,griffiths1978}.

Here we focus on special class of Poncelet N-gons: those interscribed\footnote{Shorthand for inscribed while simultaneously circumscribing.} to a pair of {\em confocal} ellipses; see \cref{fig:n5-trio}(left). Owing to Graves' theorem \cite{sergei91}, each vertex is bisected by the normal to the ellipse, i.e., the polygon can be regarded as the periodic trajectory of a point mass bouncing elastically against the outer ellipse, and therefore this system is also known as the {\em elliptic billiard}. Indeed, the latter is equipped with two integrals of motion (linear and angular momentum with respect to the foci), and is conjectured as the only integrable planar billiard \cite{kaloshin2018}. The two classical conservations are perimeter and Joachimsthal's constant \cite{sergei91}. 

\begin{figure}
    \centering
    \includegraphics[width=\textwidth]{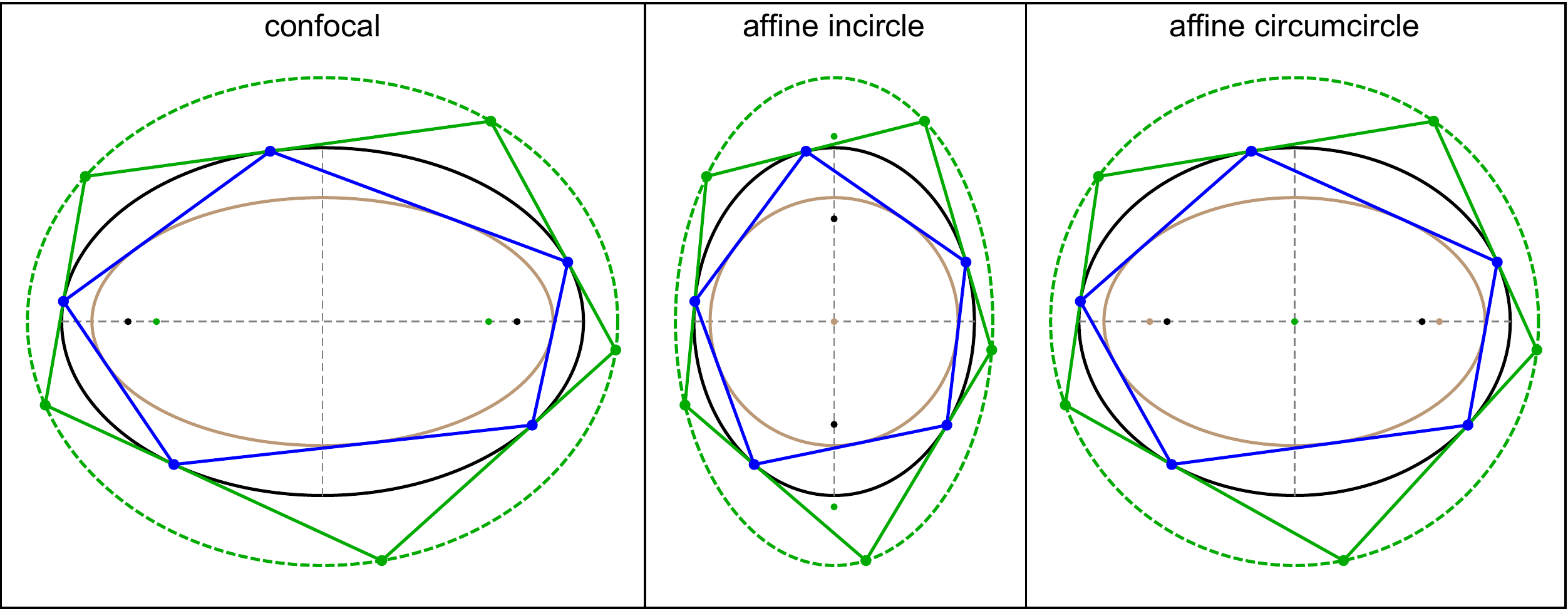}
    \caption{\textbf{Left:} An elliptic billiard Poncelet 5-periodic (blue), i.e., interscribed in a confocal pair of ellipses (black and brown). For all $N$, the 1d Poncelet family conserves the sum of its internal cosines. Also shown is the ``outer'' polygon (green), whose sides are tangent to the outer ellipse at the billiard polygon vertices. The Poncelet grid \cite{schwartz2007-grid,levi2007-poncelet-grid}, ensures outers are also inscribed in an ellipse (dashed green). These conserve the {\em product} of internal angle cosines. \textbf{Middle:} The affine image of billiard N-periodics which sends the confocal caustic to a circle (brown). Their sum of cosines is also conserved and {\em equal} to that of the confocal pre-image (left). \textbf{Right:} The affine image of confocals (left) which sends the ellipse circumscribing outer polygons (dashed green) to a circle. This family conserves also conserves its product of cosines and it is equal to that of the outers in its affine pre-image (left). Videos:
    \href{https://youtu.be/aWs29dqY34M}{N=3},
    \href{https://youtu.be/buhDTsjpvRQ}{N=5}}
    \label{fig:n5-trio}
\end{figure}

Dozens of other curious, metric conservations of elliptic billiard $N$-periodics (as these polygon families are called), have been experimentally detected \cite{reznik2021-fifty}. Naturally, these are functionally dependent on the classical ones, though in most cases one cannot find an expression for the dependency. 

Here we return to two early invariants experimentally detected in \cite{reznik2020-intelligencer}, and proved for all $N$  \cite{akopyan2020-invariants,bialy2020-invariants}:

\begin{enumerate}
    \item The sum of internal angle cosines.
    \item The {\em product} of internal angle cosines of the so-called ``outer'' family, i.e., polygons whose sides are tangent to the billiard at the N-periodic vertices.
\end{enumerate}

Additionally, though not addressed here, when $N$ is odd, the outer-to-N-periodic area ratio is invariant, a result proved in \cite{caliz2020-area-product}.

Referring to \cref{fig:n5-trio}(middle), consider the affine image of the confocal family which sends the caustic to a circle. Interestingly, one observes that the sum of cosines of this family is also constant. Surprisingly, the conserved quantity is numerically equal to that of its confocal pre-image. A. Akopyan has remarked that the first (or even both observations) can be regarded as a corollary to \cite[Theorem 6.4]{akopyan2020-invariants}.

Owing to the Poncelet grid \cite{levi2007-poncelet-grid,schwartz2007-grid}, the outer family is inscribed in an ellipse; see \cref{fig:n5-trio}(left). Consider the affine image of the confocal family which sends said outer ellipse to a circle\footnote{The affine image which sends the elliptic billiard to a circle also produces an N-periodic family whose invariant cosine product is equal to that conserved by the outer polygons of the confocal pre-image.}, as in \cref{fig:n5-trio}(right). It turns out the new circle-inscribed outer family will also conserve the product of internal cosines. Curiously, the conserved product is numerically identical to the excentral cosine product in the confocal pre-image.

Here we explore the above phenomena for the $N=3$ case, still amenable to treatment by analytic geometry. $N=4$ is a degenerate (and simpler) case: trajectories are parallelograms \cite{connes07}, the sum of cosines is zero, and the outer polygon is a rectangle inscribed in Monge's orthoptic circle \cite{reznik2020-intelligencer}. The $N=5$ case, shown in \cref{fig:n5-trio}, turns out to be unwieldy: the ellipse parameters are constrained by a ternary cubic (see \cite[Eqn. 5]{mw}). In practice, these are computed via numerical optimization, see \cite[Section 4]{reznik2021-fifty}. To properly analyze $N=5$ and higher, we would need algebro-geometric techniques such as the ones used in \cite{akopyan2020-invariants}, though these are beyond our reach. Indeed, $N{\geq}5$ porisms are typically parametrized with Jacobi elliptic functions, see for example \cite{stachel2021-motion}.

\subsection*{Main Results}

The 3-periodic confocal family and its affine image with incircle are shown side-by-side in \cref{fig:n3-trio} and superposed in \cref{fig:two-families}. Besides conserving the same sum of cosines, we show both families sweep the same planar curve in the space of cosine triples, called here ``cosine space'' (this fact is not yet proved for $N>3$), and that this curve is an equilateral cubic in the shape of a plectrum, see \cref{fig:picks-3d,fig:proj-triples}.

Similarly, the $N=3$ family of outer polygons (i.e., their excentral triangles) is shown superposed to its affine image with fixed circumcircle in \cref{fig:two-families-exc}. As mentioned above, these conserve the same product of cosines. We further show that either family sweeps the same curve in cosine space, and that this curve lies on the intersection of a sphere with the so-called Titeica surface  \cite{ferreol2017-titeica}; see  \cref{fig:sfiha,fig:titeica}.

In ``log-cosine'' space the spherical curve is flattened to a planar curve also resembling a plectrum though rounder than the one swept by the confocal-incircle duo; see \cref{fig:superposed}.

Below we (i) derive the affine transformations required to send the confocal (resp.\ outer, i.e., excentral) family to one with a fixed incircle (resp.\ circumcircle); (ii) prove the cosine sums (resp.\ product) of original and affine image are indeed the same; and (iii) compute explicit expressions for the curves swept in cosine space for either case, i.e., an equilateral cubic (resp.\ a spherical curve). Some figures contain links to animations, which are compiled in a table at the end. We close with a few questions for the reader.

\begin{figure}
    \centering
    \includegraphics[width=\textwidth]{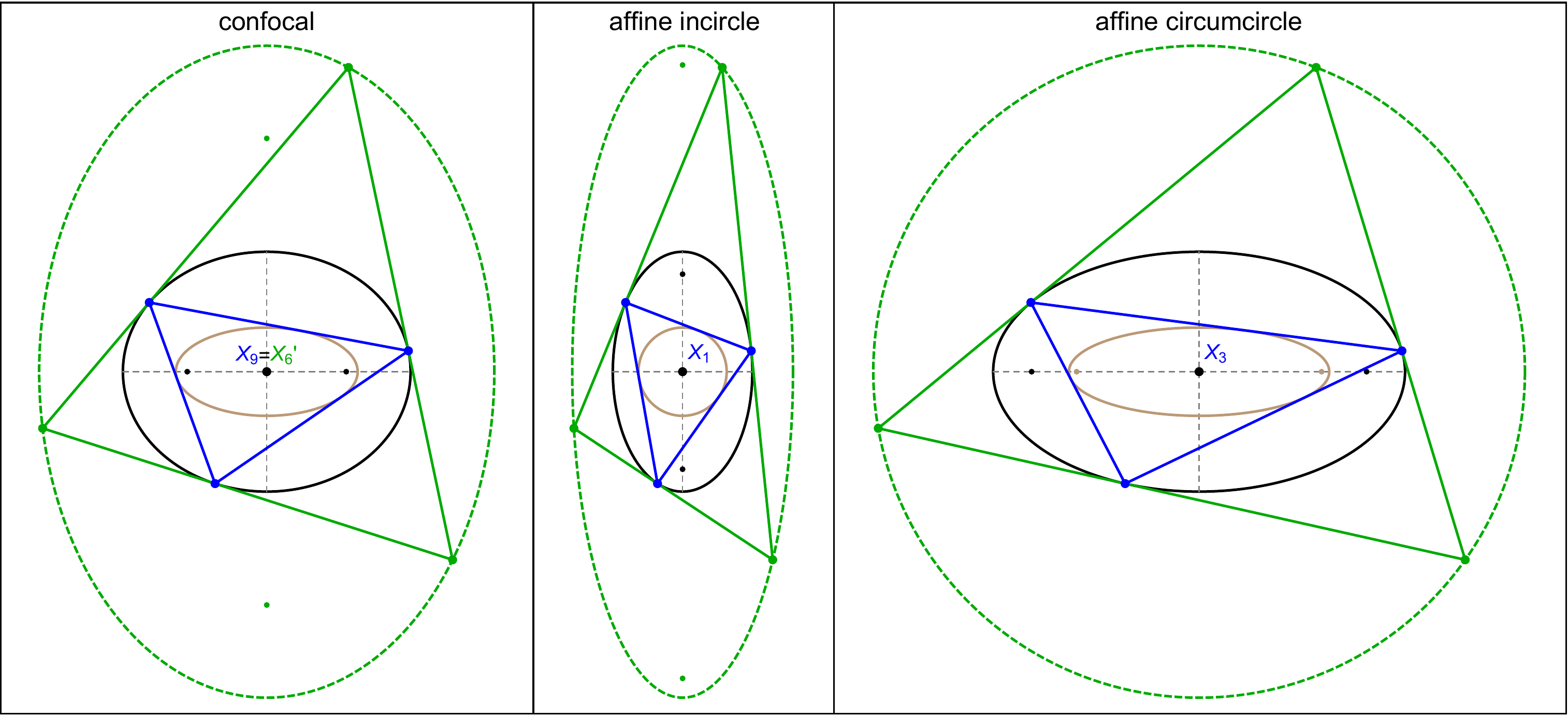}
    \caption{As before the trio of confocal 3-periodics (left), their affine image with fixed incircle (middle), and their affine image with excentral fixed circumcircle (right). \href{https://youtu.be/aWs29dqY34M}{Video}}
    \label{fig:n3-trio}
\end{figure}

\subsection*{Related Work} Trajectories in a ``bar'' billiard (comprising two circles) are studied in \cite{cieslak2013}. A billiard with reflections approximated by smooth functions is studied in \cite{lynch2019}. The recent trilogy \cite{stachel2021-motion,stachel2021-iso,stachel2021-grid} derives several novel metric properties of trajectories in elliptic billiards.

\subsection*{Article Organization} In \cref{sec:inc-conf} (resp. \cref{sec:circ-exc}) we analyze cosine sum conservation for confocals and their affine image with incircle (resp. cosine product conservation for confocal-excentrals and their affine image with circumcircle). In \cref{sec:cos-space-inc-conf,sec:cos-space-circ-exc} we analyze the common space curves swept by cosine triples in either case. In \cref{sec:conclusion} we tabulate all videos mentioned in figure captions, concluding with several open questions to the reader.

\section{Incircle and Confocals: Equal Sum of Cosines}
\label{sec:inc-conf}
Consider a pair of concentric, axis-aligned ellipses $\E,\E_c$ with semi-axes $a,b$ and $a_c,b_c$, respectively. The Cayley condition for these to admit a 1d family of Poncelet 3-periodics reduces to \cite{georgiev2012-cayley}:

\begin{equation}
    \frac{a_c}{a}+\frac{b_c}{b} = 1 
    \label{eq:cayley}
\end{equation}

Consider the family of Poncelet 3-periodics inscribed in an ellipse and circumscribed about a concentric circle of radius $r$, i.e.,  $a_c=b_c=r$. We call them the ``incircle family''. In this case, \cref{eq:cayley} implies $r=(a b)/(a+b)$.

Let the ``confocal family'' denote Poncelet 3-periodics interscribed in a pair of confocal ellipses. Referring to \cref{fig:two-families}:

\begin{lemma}
The confocal family is the image of the incircle family under a scaling by $s$ along the major axis, where:

\[ s = \sqrt{\frac{{b^4+2 a b^3}}{{a^4+2 b a^3}}} \]

%\[ \lambda =\frac{(2a-r)\cdot r^3}{(a^2-r^2)\cdot (a-r)^2} \]
\label{lem:scaling}
\end{lemma}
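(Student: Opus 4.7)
The plan is to exploit the affine invariance of Poncelet's closure theorem: any invertible affine image of a Poncelet 3-periodic family is again a Poncelet 3-periodic family (with image conics as the new outer and inner ellipses). So it suffices to apply a horizontal scaling $(x,y)\mapsto(sx,y)$ to the incircle configuration and solve for the unique $s>0$ that makes the image pair confocal.

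Concretely, under this scaling the outer ellipse $(a,b)$ maps to $(sa,b)$, and the incircle of radius $r=ab/(a+b)$ maps to an axis-aligned ellipse with semi-axes $(sr,r)$. Cayley's condition \cref{eq:cayley} transfers for free because $sr/(sa)+r/b = r/a + r/b = 1$, so the image is automatically a valid Poncelet 3-periodic pair. Confocality of two concentric, axis-aligned ellipses is equivalent to equality of their squared focal distances; since $r<\min(a,b)$ guarantees that the signs of $b^{2}-(sa)^{2}$ and $r^{2}-(sr)^{2}$ agree, this condition collapses to the single linear equation in $s^{2}$
\[
s^{2}\bigl(a^{2}-r^{2}\bigr)=b^{2}-r^{2}.
\]
Substituting $r=ab/(a+b)$ and using the factorizations $a^{2}-r^{2}=a^{3}(a+2b)/(a+b)^{2}$ and $b^{2}-r^{2}=b^{3}(2a+b)/(a+b)^{2}$ yields
\[
s^{2}=\frac{b^{3}(2a+b)}{a^{3}(a+2b)}=\frac{b^{4}+2ab^{3}}{a^{4}+2ba^{3}},
\]
and the positive square root is the claimed $s$. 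To conclude that the scaled family coincides with the confocal family (rather than merely being a Poncelet family with the same two conics up to some other ambiguity), I invoke the fact that the Poncelet 3-periodic family through a given outer/inner conic pair satisfying Cayley is unique.

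The proof really has no hard step; it is a single affine reduction followed by a short algebraic simplification. The only thing one should be mildly careful about is that when $a>b$ the factor $s$ comes out less than $1$ and in fact $sa<b$, so the scaling flips which axis is major; this is harmless because the confocality equation involves only squared focal distances and is therefore insensitive to that choice of orientation.
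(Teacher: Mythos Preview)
Your proof is correct and follows essentially the same route as the paper's: scale the incircle pair by $s$ along the $x$-axis to obtain outer ellipse $(sa,b)$ and caustic $(sr,r)$ with $r=ab/(a+b)$, then impose the confocality condition $(sa)^2-b^2=(sr)^2-r^2$ and solve for $s^2$. Your write-up is in fact more careful than the paper's, since you explicitly verify that Cayley's condition is preserved under the scaling, carry out the factorizations of $a^2-r^2$ and $b^2-r^2$, and note the harmless axis-flip when $a>b$.
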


\begin{proof}
The incircle will be inscribed in an ellipse with semi-axes $s a, b$. By Cayley, be an ellipse with semi-axes $a_c,b_c$ given by:

\[ a_c=s \frac{a b}{a + b},\;\;\;b_c=\frac{ab}{a + b} \]

The claim is obtained by imposing $a_c^2-b_c^2=(s a)^2-b^2$ and solving for $s^2$.
\end{proof}

\begin{proposition}
Both the incircle family and its $s$-scaled confocal image (\cref{lem:scaling}) have identical sums of cosines $k$ given by:

\[k=\sum_{i=1}^3 \cos\theta_i=1+\frac{2 a b}{(a+b)^2}=1+\frac{2r(a-r)}{a^2}\]
\label{prop:cos-sum}
\end{proposition}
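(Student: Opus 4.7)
The plan is to exploit the classical triangle identity $\sum_{i=1}^{3}\cos\theta_i = 1 + r_{\text{in}}/R_{\text{circ}}$ (valid for any triangle, with in- and circumradii $r_{\text{in}}$, $R_{\text{circ}}$) and to compute the sum on a single symmetric member of each family, invoking the known invariance of the sum along each family to propagate the value.

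For the incircle family the inradius is fixed by construction: $r_{\text{in}} = r = ab/(a+b)$. The symmetric isoceles member has apex $V_1 = (a,0)$ and base vertices $V_{2,3} = (-r,\pm y)$ with $y = (b/a)\sqrt{(a-r)(a+r)}$. Using the clean consequences $a-r = a^2/(a+b)$ and $a+r = a(a+2b)/(a+b)$ of $r = ab/(a+b)$, one verifies that the diagonal side has length $\ell = \sqrt{a(a+2b)}$, whence $R_{\text{circ}} = \ell^2/[2(a+r)] = (a+b)/2$ and $r_{\text{in}}/R_{\text{circ}} = 2ab/(a+b)^2$. Invariance of $\sum\cos\theta_i$ along the incircle family (equivalently, of $R_{\text{circ}}$, since the inradius is fixed) follows from Theorem 6.4 of \cite{akopyan2020-invariants}, as noted in the introduction.

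For the confocal family, invariance of $\sum\cos\theta_i$ is established in \cite{akopyan2020-invariants,bialy2020-invariants}, so again it suffices to evaluate on one member. By \cref{lem:scaling} the confocal family is the $s$-scaled image along the major axis of the incircle family, so the symmetric confocal member has vertices $(sa,0)$ and $(-sr,\pm y)$ with the \emph{same} $y$ (vertical coordinates are unchanged by the horizontal scaling). Direct computation gives $\cos\theta_1 = [s^2(a+r)^2 - y^2]/[s^2(a+r)^2 + y^2]$ at the apex and $\cos\theta_{2,3} = y/\sqrt{s^2(a+r)^2 + y^2}$ at the base vertices. Substituting $s^2 = b^3(2a+b)/[a^3(a+2b)]$ from \cref{lem:scaling} produces the key simplification $s^2(a+r)^2 + y^2 = b^2(a+2b)/a$, which rationalizes the radicals; a short expansion then collapses the sum to $1 + 2ab/(a+b)^2$, matching the incircle value.

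Finally, the two stated closed forms for $k$ are equivalent: since $a-r = a^2/(a+b)$, one has $2r(a-r)/a^2 = 2ab/(a+b)^2$. The main obstacle is the arithmetic in the confocal evaluation: the interplay between the nested radical in $s$ and the formula for $y$ must produce the clean denominator $b^2(a+2b)/a$ in order for everything to rationalize simultaneously. Once that identity is in hand the remainder is routine symbolic manipulation, and the swap $a \leftrightarrow b$ that relates the individual incircle and confocal cosines at the isoceles member provides a useful internal consistency check.
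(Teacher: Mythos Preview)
Your proof is correct and shares the paper's core scaffold: both use the identity $\sum\cos\theta_i = 1 + r/R$ and rely on externally cited invariance of the cosine sum along each family. The difference lies in how the value of $r/R$ is obtained. The paper quotes two ready-made results---that the incircle family has invariant circumradius $R=(a+b)/2$ \cite[Thm.~1]{garcia2020-family-ties}, and a closed-form expression for the invariant $(r/R)_{\text{conf}}$ from \cite{garcia2020-new-properties}---and then substitutes $\alpha = sa$, $\beta = b$ into the latter. You instead evaluate directly on the axis-symmetric (isoceles) member of each family, computing $R=(a+b)/2$ for the incircle triangle and the three cosines explicitly for its $s$-scaled confocal image. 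Your route is computationally more self-contained (it does not need the $(r/R)_{\text{conf}}$ formula at all), at the cost of a slightly longer algebraic verification; the paper's is shorter but leans on two additional external citations. Both are valid, and your key simplification $s^2(a+r)^2 + y^2 = b^2(a+2b)/a$ checks out.
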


\begin{proof}
 The incircle family conserves circumradius given by $R=(a+b)/2$ \cite[Thm. 1]{garcia2020-family-ties}. 
From \cref{eq:cayley}, $r = (ab)/(a+b)$. So the incircle family has invariant $r/R=(2 a b)/(a+b)^2$. In \cite[Thm. 1]{garcia2020-new-properties} an expression for the invariant ratio $(r/R)_\text{conf}$ over confocal 3-periodics is provided, where $\alpha,\beta$ are the semi-axes of the outer ellipse in the pair:

\begin{equation}
(r/R)_\text{conf}=\frac{2 (\delta-\beta^2)(\alpha^2-\delta)}{(\alpha^2-\beta^2)^2},\;\;\;\delta = \sqrt{\alpha^4-\alpha^2 \beta^2+\beta^4}
\label{eqn:rOvR}
\end{equation}

By setting $\alpha=s\,a$ and $\beta=b$ in the above, where $s$ is as in \cref{lem:scaling}, after simplification one obtains $(r/R)_\text{conf}=(2 a b)/(a+b)^2$. Recall the sum of cosines is equal to $1+r/R$ \cite[Inradius, Eq. 9]{mw}.
\end{proof}

Recall that as stated in \cref{sec:inc-conf}, the incircle and its affine confocal image have identical sums of cosines for all $N$ \cite[Corollary 6.4]{akopyan2020-invariants}.

\begin{figure}
    \centering
    \includegraphics[width=.7\textwidth]{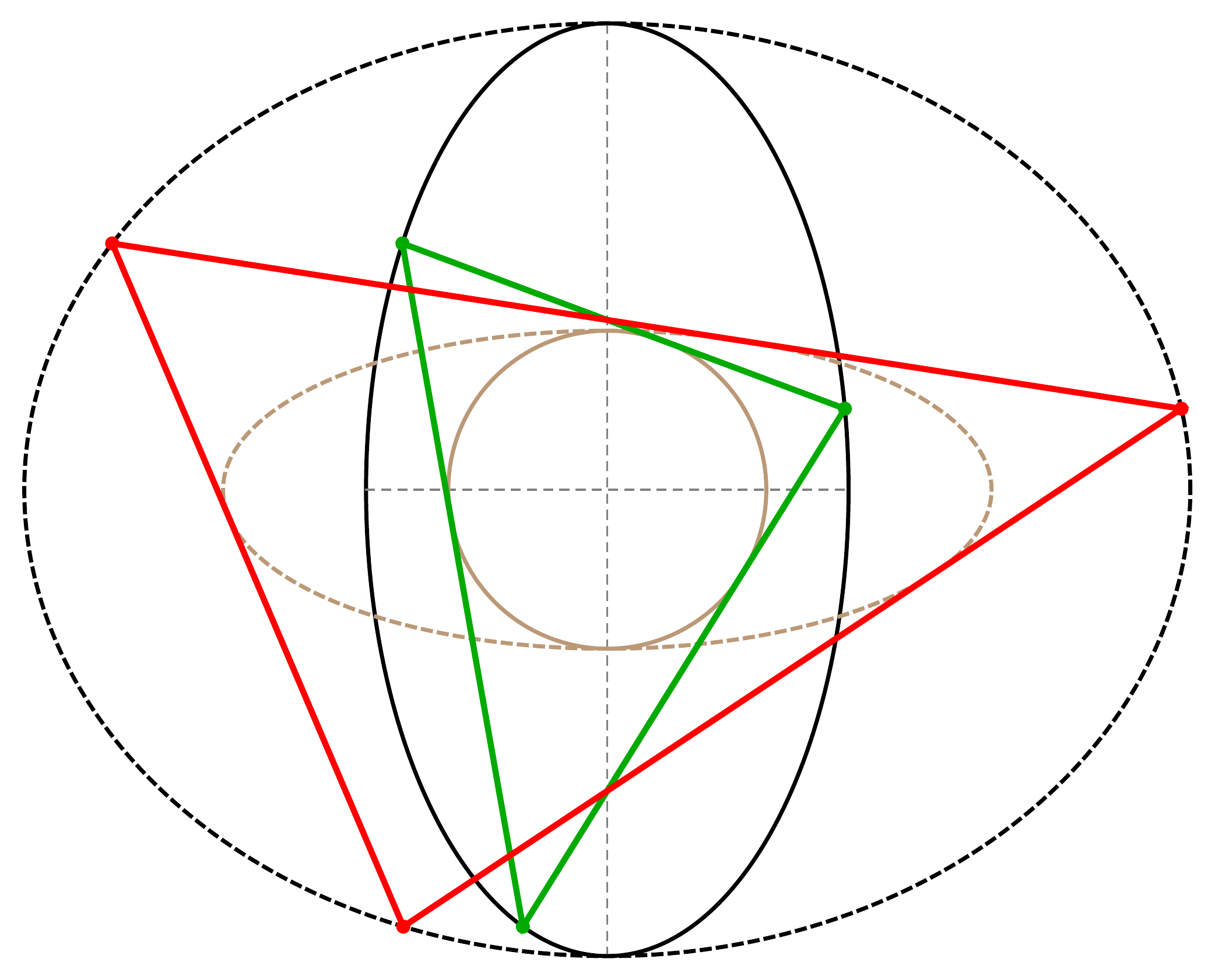}
    \caption{A 3-periodic (green) of the incircle family (solid black and brown ellipses) is shown along with a 3-periodic (red) in the confocal family (dashed black and brown) such that the latter is a scaled image of the former along the major axis. Their angle triples sweep identical curves (\cref{thm:incircle-confocal-space}).
    \href{https://youtu.be/CKVoQvErjj4}{Video}}
    \label{fig:two-families}
\end{figure}

\section{Circumcircle and Excentrals: Equal Product of Cosines}
\label{sec:circ-exc}
Consider the Poncelet family of 3-periodics inscribed in a circle of radius $R$ and circumscribing a concentric ellipse of semi-axes $a,b$. In this case, \cref{eq:cayley} implies $R=a+b$. We refer to these as the ``circumcircle family''.

Let the ``excentral family'' denote the excentral triangles to confocal 3-periodics, i.e., whose sides pass through the vertices perpendicular to the bisectors \cite[Excentral Triangle]{mw}. Referring to \cref{fig:two-families-exc}:

\begin{lemma}
A unique scaling transformation sends the circumcircle family to the excentral one. This is a scaling by $s'$ along the circumcircle's caustic major axis, where:

\[ s' = \sqrt{\frac{{2b^2+a b}}{{2 a^2+a b}}} \]
\label{lem:scaling-exc}
\end{lemma}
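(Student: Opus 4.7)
The plan is to follow the blueprint of \cref{lem:scaling} and its proof: apply a scaling along the circumcircle's caustic major axis (the $x$-axis, with $a>b$) by a factor $s'$, note that the Cayley closure is preserved automatically so the image is again a Poncelet 3-periodic pair, and then pin down the unique $s'$ by imposing that this new pair coincides with the excentral pair of a confocal billiard. Explicitly, the scaling sends the caustic $(a,b)$ to $(s'a,b)$ and the circumcircle of radius $R=a+b$ to the ellipse of semi-axes $(s'R,R)$; the matching conditions are $(\alpha,\beta)=(s'a,b)$ (the billiard) and $(A_e,B_e)=(s'R,R)$, where $(A_e,B_e)$ denotes the ellipse inscribing the excentral triangles of the confocal billiard with outer $(\alpha,\beta)$.

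The key ingredient is therefore a formula for $(A_e,B_e)$. With $\delta:=\sqrt{\alpha^4-\alpha^2\beta^2+\beta^4}$ as in \cref{eqn:rOvR}, I aim to establish
\[A_e=\frac{\delta+\beta^2}{\alpha},\qquad B_e=\frac{\delta+\alpha^2}{\beta}.\]
I would derive this using the isoceles confocal 3-periodic with one vertex at $(\alpha,0)$: by mirror symmetry across the $x$-axis, the excenter opposite that vertex lies on the $x$-axis, while the other two are horizontal reflections of each other with $x$-coordinate $\alpha$ and opposite $y$-coordinates. Computing the three excenters via the barycentric formula, imposing that they lie on a concentric axis-aligned ellipse, and simplifying using Cayley ($\alpha_c/\alpha+\beta_c/\beta=1$) together with the confocal identity $\alpha^2-\beta^2=\alpha_c^2-\beta_c^2$, the condition on $A_e$ collapses to the quadratic $A_e^2-(2\delta/\alpha)A_e+(\alpha^2-\beta^2)=0$, whose discriminant simplifies exactly to $4\beta^4/\alpha^2$; thus $A_e=(\delta\pm\beta^2)/\alpha$ and the orientation forces the $+$ root. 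An identical argument at the isoceles orbit with vertex at $(0,\beta)$ delivers $B_e$. As a consistency check, the resulting pair satisfies Cayley $\alpha/A_e+\beta/B_e=1$ automatically.

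With the formula in hand, the lemma becomes a short calculation: the equations $s'R\cdot\alpha=\delta+\beta^2$ and $R\cdot\beta=\delta+\alpha^2$ yield the two expressions $\delta=s'^2 a(a+b)-b^2$ and $\delta=b(a+b)-s'^2 a^2$, whose equality collapses to $s'^2 a(2a+b)=b(a+2b)$ and hence
\[s'^2=\frac{b(a+2b)}{a(2a+b)}=\frac{2b^2+ab}{2a^2+ab},\]
as claimed. Uniqueness is automatic, since a single-axis scaling is determined by its one factor. The main obstacle is the derivation of $(A_e,B_e)$: although the isoceles-orbit technique cleanly yields it, the algebraic simplification that reduces the three-point concyclicity condition to the neat quadratic above is where the non-trivial work lies; once the formula is available, the remaining scaling computation is immediate.
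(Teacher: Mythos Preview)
Your proposal is correct and follows essentially the same route as the paper: scale the circumcircle pair along the caustic's major axis, identify the image caustic with the confocal billiard $(\alpha,\beta)=(s'a,b)$, and match the image of the circle to the known excentral ellipse $(a_e,b_e)=\bigl((\beta^2+\delta)/\alpha,\,(\alpha^2+\delta)/\beta\bigr)$. The paper simply cites that excentral-ellipse formula from \cite{garcia2019-incenter} and then imposes only the single condition $b_e=a+b$, squaring out $\delta$ to solve for $s'$; you instead sketch a derivation of the formula via the two isoceles orbits and then impose both $A_e=s'R$ and $B_e=R$, eliminating $\delta$ by equating the two resulting expressions---a neat shortcut that avoids the radical. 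These are minor algebraic variations on the same argument, not a different approach.
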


\begin{proof}
Let $a,b$ denote the axes of caustic to the circumcircle family. Since $R = (a + b)$, a scaling by $s'$ along the major axis sends the outer circle to $(s'(a+b),a+b))$ and the caustic to $(s' a,b)$.  In \cite{garcia2019-incenter} a formula was derived for the semi-axes $a_e,b_e$ of the elliptic locus of the excenters of 3-periodics in the confocal pair where $\alpha,\beta$ are its major and minor semi-axes:

\[ a_e=(\beta^2+\delta)/\alpha,\;\;\;b_e=(\alpha^2+\delta)/\beta \]
where as before, $\delta=\sqrt{\alpha^4-\alpha^2 \beta^2+\beta^4}$.

Since the excentral family is the family of excentral triangles to confocal 3-periodics, we set $\alpha=s' a$ and $\beta = b$ and impose $b_e=a+b$, solving this for $s'$. Simplification yields the result.
\end{proof}

\begin{figure}
    \centering
    \includegraphics[width=.7\textwidth]{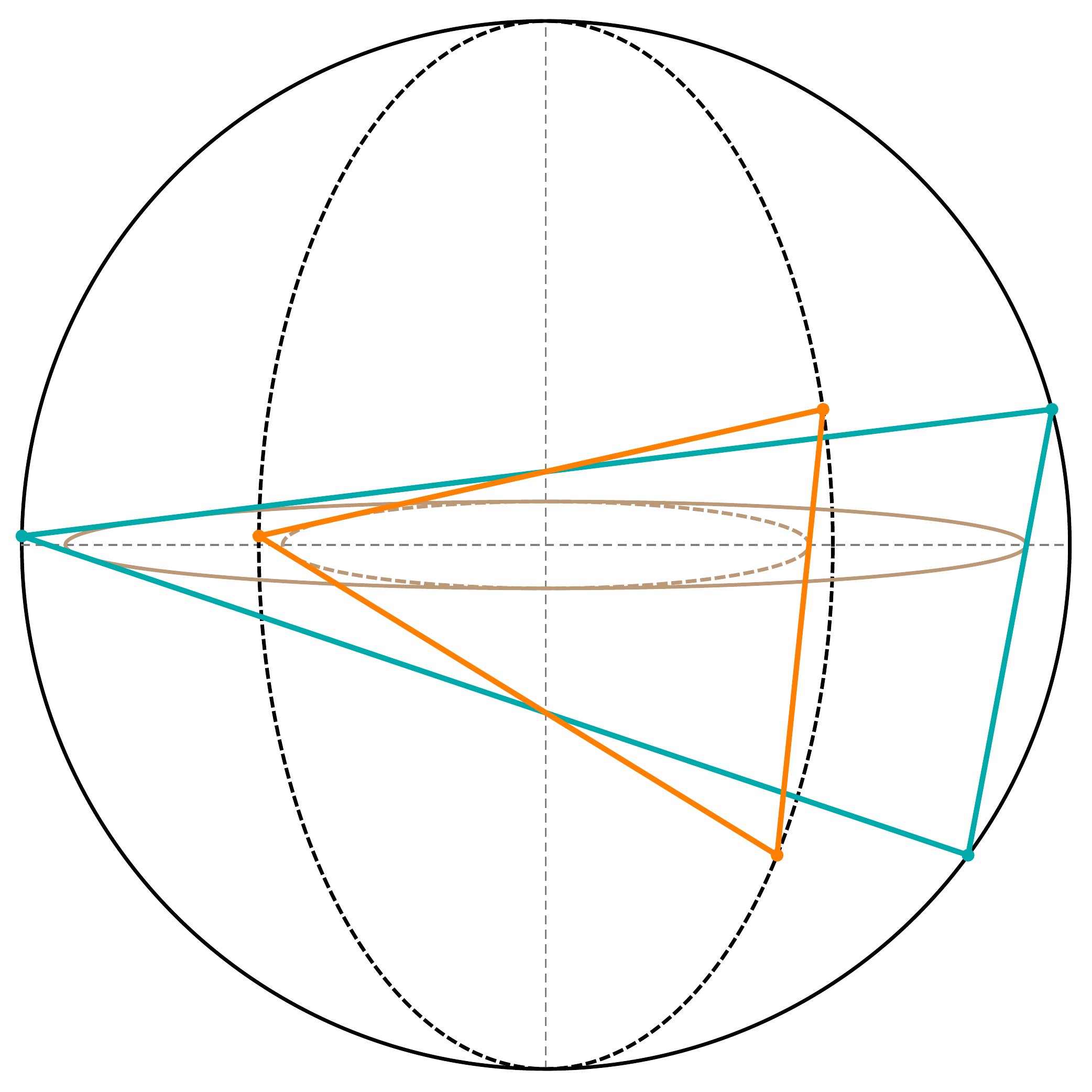}
    \caption{A 3-periodic (dark cyan) of the circumcircle family (solid black and brown ellipses) is shown along with a 3-periodic (orange) of the excentral family (dashed black and brown ellipses) such that the latter is a scaled image of the former along the major axis. Note both families are always acute; their angle triples sweep identical curves. \href{https://youtu.be/PMqoH4oGt10}{Video}}
    \label{fig:two-families-exc}
\end{figure}

A curious property of the orthic\footnote{A triangle's orthic has vertices at the feet of the altitudes \cite[Orthic Triangle]{mw}.} triangles to the circumcircle family is that both their inradius $r_h$ and circumradius $R_h$ are invariant \cite{garcia2020-family-ties}; see it in motion \href{https://bit.ly/30ZU0gz}{here}. Feuerbach proved that the product of cosines of a triangle is equal to $r_h/(4R_h)$  \cite[sec 299(g), p. 191]{johnson1960}. This entails that the circumcircle family conserves the product $k'$ of its angle cosines. In \cite[Lemma 1]{garcia2020-family-ties} the following expression is provided:

\begin{equation} k'=\prod_{i=1}^3{\cos\theta_i}=\frac{a b}{2 (a+b)^2}
\label{eqn:prod-cos}
\end{equation}

\begin{proposition}
The circumcircle family and its $s'$-scaled excentral image (\cref{lem:scaling-exc}) have identical products of cosines given by \cref{eqn:prod-cos}.
%\label{prop:cos-prod}
\end{proposition}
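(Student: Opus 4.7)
The plan is to reduce $\prod \cos\theta_i^e$ for the excentral family to $(r/R)_{\text{conf}}$ of the underlying confocal 3-periodic family (whose outer semi-axes are $(\alpha,\beta)=(s'a,b)$ by \cref{lem:scaling-exc}), and then apply \cref{eqn:rOvR}. First, I would invoke the classical fact that the excentral triangle $T_e$ of a triangle $T$ with angles $\theta_i$ has angles $\theta_i^e=\pi/2-\theta_i/2$ (in particular, $T_e$ is always acute, consistent with \cref{fig:two-families-exc}). Hence $\cos\theta_i^e=\sin(\theta_i/2)$, and combining the standard identity $\cos A+\cos B+\cos C=1+4\sin(A/2)\sin(B/2)\sin(C/2)$ (valid when $A+B+C=\pi$) with $\sum_i \cos\theta_i=1+r/R$ yields
\[\prod_{i=1}^3 \cos\theta_i^e \;=\; \prod_{i=1}^3 \sin(\theta_i/2) \;=\; \tfrac{1}{4}(r/R)_{\text{conf}}.\]

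Second, I would substitute $\alpha=s'a$ and $\beta=b$ into \cref{eqn:rOvR}. The pivotal algebraic observation is that under this specific substitution the radicand $\alpha^4-\alpha^2\beta^2+\beta^4$ factors as a perfect square, making
\[\delta \;=\; \frac{b(a^2+ab+b^2)}{2a+b}\]
rational in $a,b$. Then $\delta-\beta^2$, $\alpha^2-\delta$, and $\alpha^2-\beta^2$ each reduce to a rational multiple of $(a-b)/(2a+b)$; after cancellation everything collapses to $(r/R)_{\text{conf}}=2ab/(a+b)^2$. Dividing by $4$ gives $\prod \cos\theta_i^e=ab/(2(a+b)^2)$, which is exactly the $k'$ of \cref{eqn:prod-cos}, completing the proof.

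The main obstacle is just the algebraic bookkeeping of the second step; the crucial shortcut is the perfect-square factorization of $\alpha^4-\alpha^2\beta^2+\beta^4$ under the particular scaling $s'$ of \cref{lem:scaling-exc}. That the expression collapses so cleanly is, in effect, the geometric content of the proposition: $s'$ is tuned precisely so that the confocal $(r/R)$ matches $2ab/(a+b)^2$ and hence the excentral cosine product matches the circumcircle one. A conceivable alternative route goes through Feuerbach's identity $\prod\cos\theta=r/(4R)$ applied directly to $T_e$, together with $R_e=2R$, but controlling the excentral inradius $r_e$ along the family seems less direct than the $\sin(\theta/2)$ identity path above.
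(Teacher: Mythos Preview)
Your proposal is correct and follows essentially the same route as the paper: both reduce the excentral cosine product to $\tfrac{1}{4}(r/R)_{\text{conf}}$, substitute $\alpha=s'a$, $\beta=b$, and simplify to $ab/(2(a+b)^2)$. The only difference is cosmetic: the paper cites the identity ``product of excentral cosines $=r/(4R)$'' directly, whereas you derive it via the excentral-angle formula $\theta_i^e=\pi/2-\theta_i/2$ and the standard $\sin(A/2)\sin(B/2)\sin(C/2)=r/(4R)$; you also make explicit the perfect-square factorization $\alpha^4-\alpha^2\beta^2+\beta^4=b^2(a^2+ab+b^2)^2/(2a+b)^2$ that the paper hides under ``after simplification.''
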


\begin{proof}
The product of excentral cosines of a generic triangle is given by $r/(4R)$ \cite[Circumradius]{mw}. So the product of cosines of the excentral family is given by a quarter of $(r/R)_\text{conf}$, \cref{eqn:rOvR}. We set $\alpha,\beta$ to the scaled caustic axes, i.e., $\alpha=s' a$ and $\beta=b$, where $s'$ is as in \cref{lem:scaling-exc}. After simplification obtain that $(1/4)(r/R)_\text{conf}$ is identical to \cref{eqn:prod-cos}.
\end{proof}

\section{Incircle and Confocals: Cosine Space}
\label{sec:cos-space-inc-conf}
Consider the usual parametrization for point on an ellipse $P(t)=[a\cos(t),b\sin(t)]$, i.e., the parameter $t$ for $P=(x,y)$ is $t=\tan^{-1}{(a y)/(b x)}$. Using it for a first vertex $P_1(t)$ of a 3-periodic family implies that the parameters for the other two vertices will be non-linear and distinct on $t$, see \cref{fig:02-jacobi-param}(top).

\begin{figure}
    \centering
    \includegraphics[width=\textwidth]{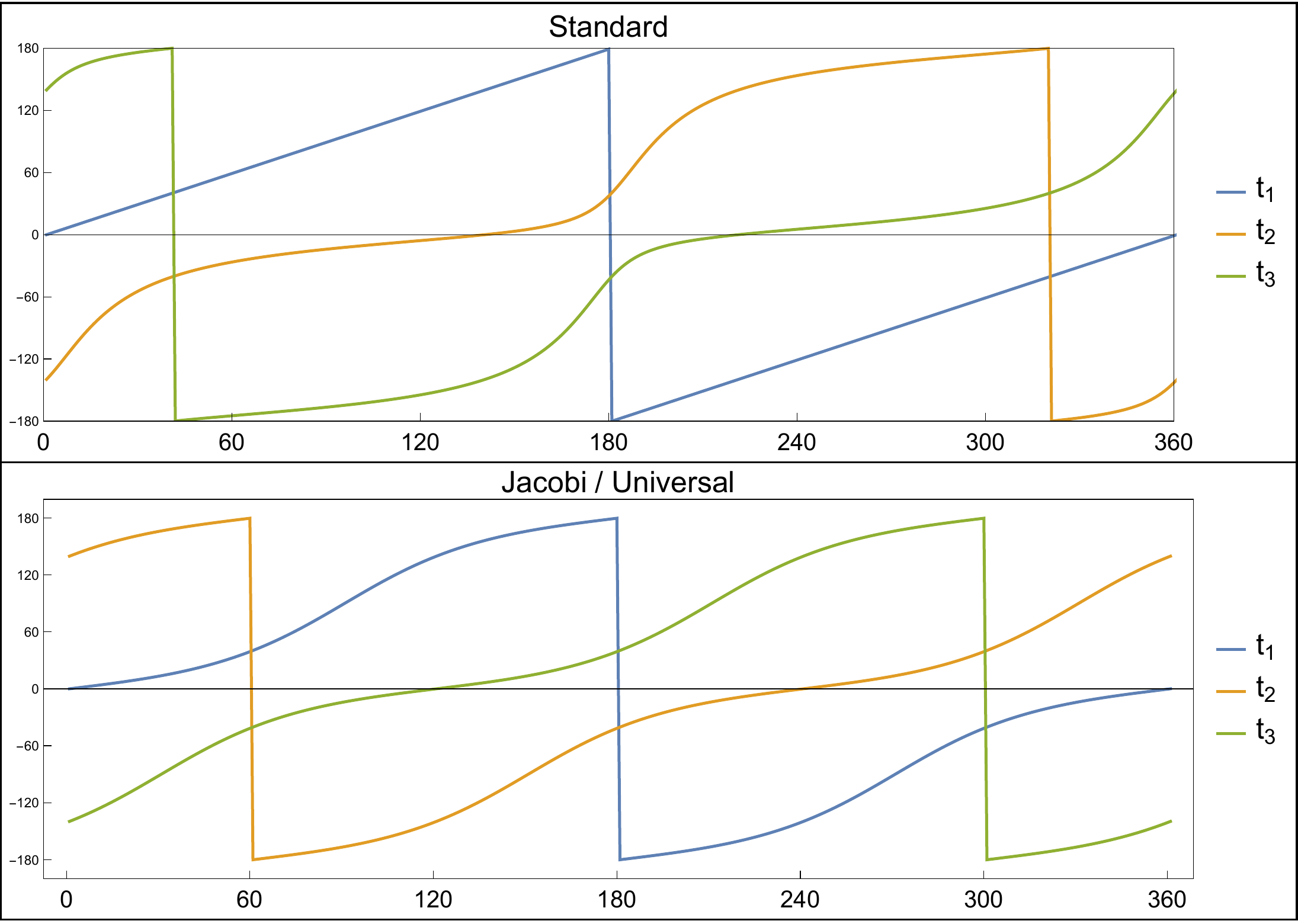}
    \caption{The parameter $t_i$ (vertical axis) of a point on an ellipse with semiaxes $a,b$ vs the parameter of a first vertex (horizontal axis). \textbf{Top:} usual parametrization, $P_1(t)=[a\cos{t},b\sin{t}]$. Notice while $P_1$'s position evolves linearly, those of $P_2$ and $P_3$ are different, non-linear curves. \textbf{Bottom:} Parametrization based on the universal-measure. The three curves are identical, separated by a 120-degree phase.} 
    \label{fig:02-jacobi-param}
\end{figure}

Fortunately, a special parametrization exists which renders the parameters identical functions (up to phase). This is based on the ``universal measure'' of the Poncelet map \cite{stachel2021-motion,koiller2021-spatial}. Namely, vertices are obtained by using fixed multiples of a constant $\Delta{u}$ as the argument of certain elliptic functions. Borrowing from \cite{stachel2021-motion}:

\begin{theorem*}
A billiard 3-periodic $P_i$ $(i=1,\ldots, N) $ of period $N$ with turning number $\tau$, where $\mathrm{gcd}(N,\tau) =1$,  is parametrized on $u$ with period $4K$ where:

\[ P_i=
\left[-a\,\sn  \left(u+ i \Delta{u},  {m} \right) , b\,\cn  \left(u + i \Delta{u}, { m} \right)\right] \]

where,
\[ m^2=\frac{a_c^2-b_c^2}{a_c^2},\;\;\Delta{u}=\frac{4\tau K}{N}\]
\[a= \sqrt{b^2+ a_c^2-b_c^2}, \;\; b=\frac{b_c}{\cn({\Delta{u}}/{2}, m)}\]
where $\sn,\cn,m$ are the elliptic sine, cosine, and modulus, respectively \cite{armitage2006}, $\tau$ is the turning number, and $K$ is the complete elliptic integral of the first kind.
\end{theorem*}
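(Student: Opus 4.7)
Since the result is attributed to Stachel, I would treat the plan as outlining why the Jacobi parametrization is inevitable rather than reproving it from scratch. The plan has three parts: (i) choose a parametrization of the outer ellipse $\E$ by elliptic functions so that the Poncelet shift becomes a rigid translation in the parameter; (ii) pin down the modulus and step size from the geometry; (iii) extract the algebraic identities relating $a,b,a_c,b_c$.

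First, I would write any point of $\E$ as $P(u)=(-a\,\sn(u,m),\,b\,\cn(u,m))$, which is a valid parametrization for any $m\in(0,1)$ since $\sn^2+\cn^2=1$, with $u\in\R/4K\R$. Confocality of $\E$ and $\E_c$ reads $a^2-b^2=a_c^2-b_c^2$, which immediately gives $a=\sqrt{b^2+a_c^2-b_c^2}$, as stated.

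The crux is to show that the chord through $P(u_1)$ and $P(u_2)$ is tangent to the caustic $\E_c$ iff $u_2-u_1$ equals a constant $\Delta u$ depending only on $a_c,b_c$. To do this I would form the line through $P(u_1)$ and $P(u_2)$, write the tangency condition with $\E_c$ (either via the discriminant of the line--ellipse intersection or via the classical Joachimsthal identity for confocal pairs), and apply the Jacobi addition formulas for $\sn,\cn,\dn$. With the specific choice $m^2=(a_c^2-b_c^2)/a_c^2$, i.e., the squared eccentricity of the caustic, those identities collapse the tangency condition so that only the difference $u_2-u_1$ survives; this is what entitles $u$ to the name \emph{universal measure} and is where all the analytic work sits. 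It is the main obstacle, since verifying the cancellation requires careful bookkeeping of the addition formulas and, properly done, is essentially a translation of Cayley's closure criterion into the uniformizing coordinate of the associated elliptic curve.

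Granted this rigid shift, iterating the billiard map yields $P_i=P(u+i\,\Delta u)$, and Poncelet closure $P_{N+i}=P_i$ forces $N\,\Delta u\in 4K\,\mathbb{Z}$; writing $N\,\Delta u=4\tau K$ with $\gcd(N,\tau)=1$ introduces the turning number and delivers $\Delta u=4\tau K/N$. To pin down $b$, I would specialize to the symmetric chord $u_1=-\Delta u/2,\ u_2=\Delta u/2$: parity of $\sn$ and $\cn$ makes both endpoints share the ordinate $b\,\cn(\Delta u/2,m)$, so the chord is horizontal and can only be tangent to $\E_c$ at height $\pm b_c$. Hence $b\,\cn(\Delta u/2,m)=b_c$, yielding $b=b_c/\cn(\Delta u/2,m)$ and completing the list of identities.
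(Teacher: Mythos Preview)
The paper does not prove this theorem at all: it is quoted verbatim from Stachel (``Borrowing from \cite{stachel2021-motion}''), so there is no in-paper argument to compare against. Your outline is a faithful sketch of the standard derivation---parametrize $\E$ by $(\sn,\cn)$, use the Jacobi addition formulas with modulus equal to the caustic eccentricity to reduce the tangency condition to a constraint on $u_2-u_1$ alone, impose closure to obtain $\Delta u=4\tau K/N$, and read off $b$ from the horizontal symmetric chord---and you correctly flag the addition-formula cancellation as the place where the real work lies. As a self-contained proof it would still owe the reader the explicit verification of that cancellation (or a pointer to the elliptic-curve uniformization that makes it automatic), but as a plan it is sound and matches what Stachel does.
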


Since here we are considering billiard triangles, $N=3$ and $\tau=1$. As shown in \cref{fig:02-jacobi-param}(bottom), under this parametrization the parameters $t_i$ of the 3 vertices are identical functions at a 120-degree phase.

Recall the sum of cosines is constant for billiard 3-periodics. \cref{fig:02-jacobi-cos-param} shows how individual cosines in both the confocal and incircle families are either (i) 6 distinct functions under the usual parametrization, or (ii) the same exact ones (at different phases) under the Jacobi parametrization.

\begin{figure}
    \centering
    \includegraphics[width=\textwidth]{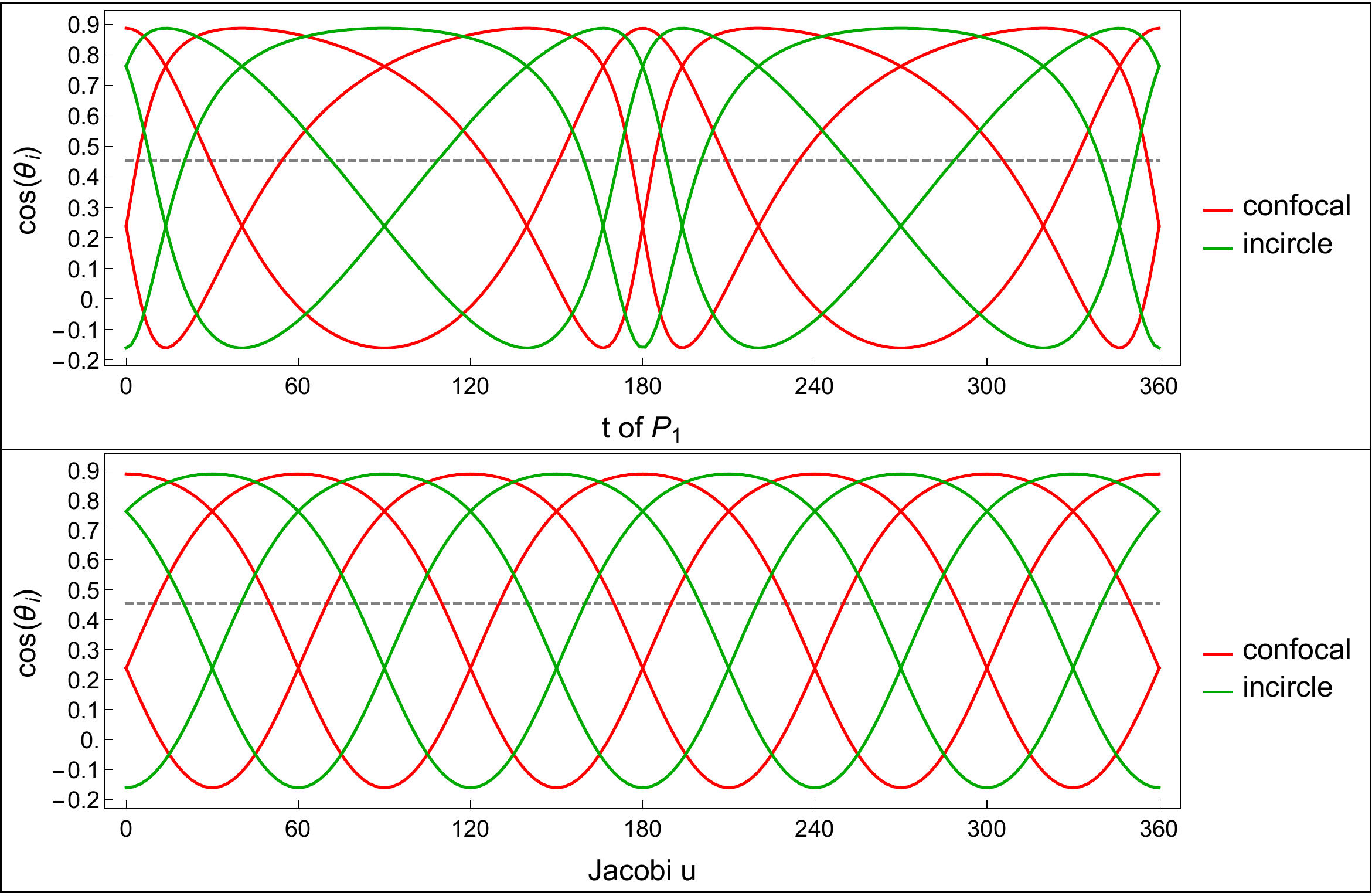}
    \caption{Cosines of billiard 3-periodics using the standard (top) vs. the universal measure parametrization (bottom) for both the confocal (red) and incircle (green) families. In the former case the six curves are distinct, while in the latter, cosines of a given family are at 60-degree phases, while the two sets are at a a 30 degree phase from each other. Also shown (dashed gray horizontal line) is the average cosine common to both families.}
    \label{fig:02-jacobi-cos-param}
\end{figure}
\index{parametrization!Jacobi}

\subsection*{Sweeping a Cubic}
 
 Let $k$ denote the invariant sum of cosines in the incircle family. As shown in \cref{fig:picks-3d}, the locus of the 3 cosines in 3d is a family of plane curves, since $c_1+c_2+c_3=k$. Referring to \cref{fig:proj-triples}:
 
 \begin{figure}
    \centering
    \includegraphics[width=\textwidth]{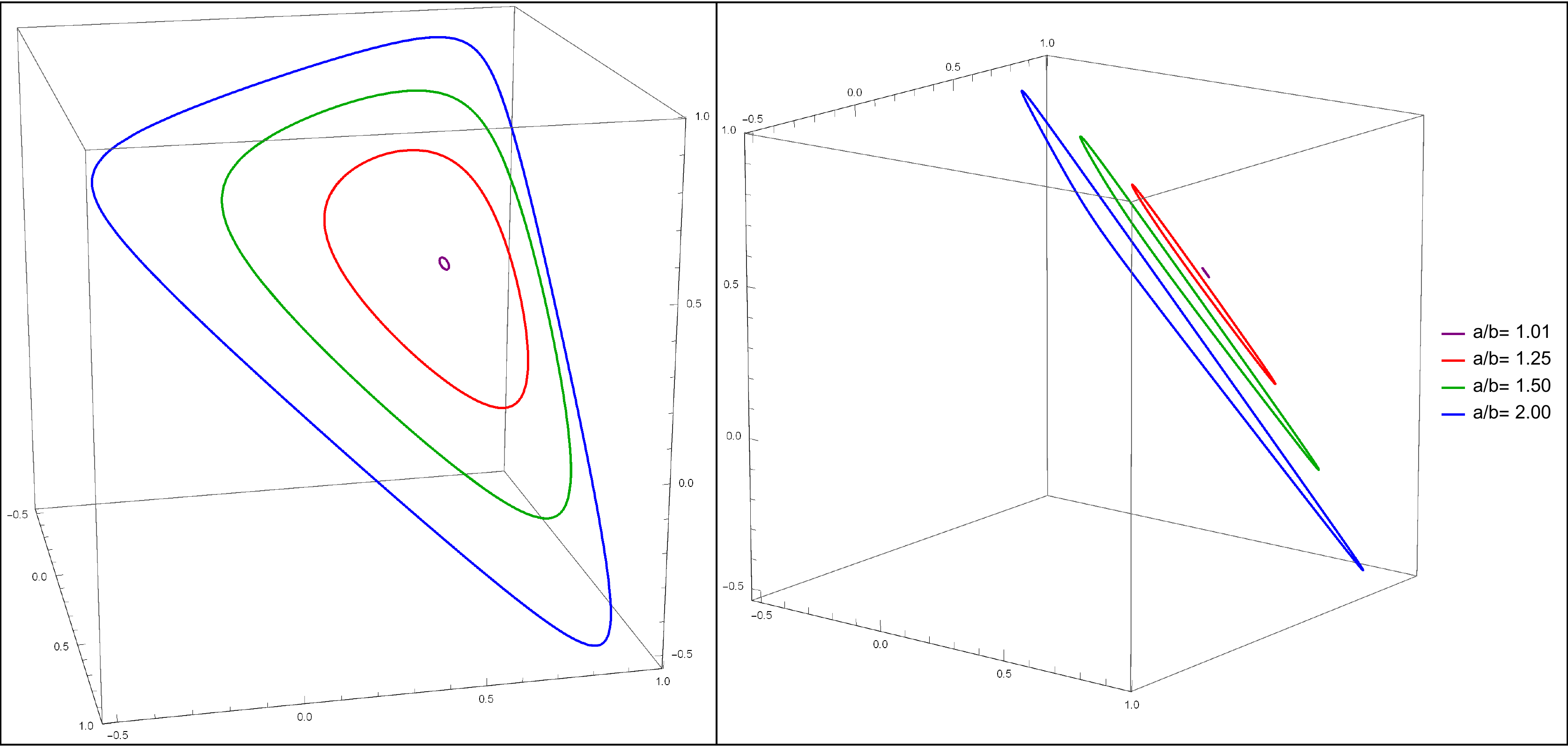}
    \caption{Two 3d views of the planar loci of cosines over 3-periodics in the confocal pair (and/or the affinely-related incircle pair), for various ratios of $a/b$. These are curves shaped as ``guitar picks'' which lie on the plane perpendicular to $[1,1,1]$.}
    \label{fig:picks-3d}
\end{figure}
 
\begin{theorem}
The locus $\Delta$ of cosine triples of 3-periodics in the incircle and affinely-related confocal families are equilateral cubics, given implicitly by:
\label{thm:incircle-confocal-space}
\[\Delta(u,v):3\sqrt{6}(3u^2 - v^2)v - 9(k - 3)(u^2 + v^2) + (2k-3)(k + 3)^2=0\]
where $k$ is as in \cref{prop:cos-sum}.

%\begin{align*}
%f(x,y,z)&=x+y+z-k =0\\
%    g(x,y)&= \left( a+b \right) ^{4}{x} y (x+y) - \left( a+b \right) ^{4}({x}^{2} +y^2)
%-2\, \left( {a}^{2}+3\,ab+{b}^{2}\right)  \left( a+b \right) ^{2}xy\\
%&+ \left( {a}^{2}+4\,ab+{b}^{2} \right)  \left( a+b \right) ^{2}(x +y)
%-2\,ab \left( {a}^{2}+3\,ab+{b}^{2} \right) =0\\
%&=(y - 1)(x - 1)(x + y)a^4 - 2r(y - 1)(x - 1)a^3 + 2((y - 1)x - y)r^2a^2 + 4ar^3 - 2r^4=0\\
% &=2xy(x+y) - 2(x^2 +  y^2)- 2(k +1)xy + 2k(x+y)   +1- k^2 =0\\
%&k={\frac {{a}^{2}+4\,ab+{b}^{2}}{ \left( a+b \right) ^{2}}},\;\; r=\frac{ab}{a+b}
%\end{align*}
\end{theorem}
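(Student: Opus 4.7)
The plan is to derive the implicit equation from a single classical identity: for any triangle,
$$\cos^2\theta_1+\cos^2\theta_2+\cos^2\theta_3+2\cos\theta_1\cos\theta_2\cos\theta_3=1.$$
Combined with the invariant sum $c_1+c_2+c_3=k$ from \cref{prop:cos-sum}, which we have already shown is common to both families, each cosine triple in either family is pinned to the intersection of this fixed cubic surface in $\mathbb{R}^3$ with the affine plane $\{s_1=k\}$. Because both equations are symmetric in $(c_1,c_2,c_3)$, the resulting curve is invariant under any permutation of the three cosines; after setting up orthonormal coordinates on the plane centered at $(k/3,k/3,k/3)$, the cyclic subgroup $\mathbb{Z}/3$ acts as rotation by $120^\circ$ about the origin. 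This already predicts the equilateral (threefold-symmetric) character of the result and forces any cubic implicit equation to be a combination of $u^2+v^2$ and $\operatorname{Im}((u+iv)^3)=v(3u^2-v^2)$, the unique real cubic form in two variables with threefold rotational symmetry.

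To extract the explicit equation I would introduce the orthonormal parametrization
$$c_1=\tfrac{k}{3}+\tfrac{u}{\sqrt{2}}+\tfrac{v}{\sqrt{6}},\qquad c_2=\tfrac{k}{3}-\tfrac{u}{\sqrt{2}}+\tfrac{v}{\sqrt{6}},\qquad c_3=\tfrac{k}{3}-\tfrac{2v}{\sqrt{6}},$$
under which $c_1+c_2+c_3=k$ holds automatically and $(u,v)$ range over the full plane. A direct expansion gives $c_1^2+c_2^2+c_3^2=k^2/3+u^2+v^2$. Writing $c_1c_2$ as a difference of squares $\bigl(\tfrac{k}{3}+\tfrac{v}{\sqrt{6}}\bigr)^2-\tfrac{u^2}{2}$ and multiplying by $c_3$ collapses the triple product to
$$c_1 c_2 c_3=\frac{k^3}{27}-\frac{k}{6}(u^2+v^2)+\frac{v(3u^2-v^2)}{3\sqrt{6}},$$
in agreement with the symmetry analysis above.

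Substituting into the triangle identity and multiplying through by $27$ leaves
$$3\sqrt{6}\,(3u^2-v^2)v\;-\;9(k-3)(u^2+v^2)\;+\;\bigl(2k^3+9k^2-27\bigr)=0,$$
and the elementary factorization $2k^3+9k^2-27=(2k-3)(k+3)^2$ (verified by noting $k=3/2$ and $k=-3$ are rational roots of the cubic on the left) produces exactly the stated equation $\Delta(u,v)=0$. The only step of conceptual content is the appeal to the triangle identity; the remainder is bookkeeping, so there is no serious obstacle. To upgrade the set-theoretic inclusion to the stronger claim that each family actually \emph{sweeps} the full real cubic, I would appeal to the Jacobi universal-measure parametrization recalled just above: as $u$ traverses a fundamental period $4K$, the cosine triple completes a full revolution of the curve, and the corresponding statement for the confocal family is inherited because the scaling of \cref{lem:scaling} is a bijection between the two families.
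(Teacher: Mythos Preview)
Your argument is correct and takes a genuinely different route from the paper's. The paper parametrizes the incircle family explicitly as $P_1(t)=[a\cos t,b\sin t]$, works out $P_2,P_3$ by trigonometry, computes $c_1,c_2^2,c_3^2$ via the law of cosines, and then uses CAS-assisted resultants to eliminate $t$ and obtain an implicit relation in $(c_1,c_2)$; it then repeats the entire calculation for the confocal family after applying the scaling of \cref{lem:scaling} and checks that the same implicit appears. You instead invoke the classical identity $\sum\cos^2\theta_i+2\prod\cos\theta_i=1$, valid for \emph{any} triangle, and intersect it with the hyperplane $\sum c_i=k$ already secured by \cref{prop:cos-sum} for both families. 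This buys you two things the paper's proof does not: the derivation is elementary and CAS-free, and it explains \emph{a priori} why the incircle and confocal loci coincide---any triangle family with invariant cosine sum $k$ lands on the same cubic, so no second computation is needed. Your symmetry remark (that $\mathbb{Z}/3$ acts by $120^\circ$ rotations, forcing the cubic part to be $v(3u^2-v^2)$) is also a nice structural observation absent from the paper.

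One small point: your orthonormal frame $(1,-1,0)/\sqrt2$, $(1,1,-2)/\sqrt6$ is the negative of the one the paper describes in its proof, but since you reproduce the equation exactly as stated in the theorem this is immaterial. On the ``full sweep'' issue you are, if anything, more careful than the paper: the paper's proof only derives the implicit and does not argue that the entire cubic is traced, whereas you at least flag the distinction and sketch a continuity/periodicity argument.
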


\begin{proof}
Let $P_i$ be the vertices of a 3-periodic in the incircle family. Parametrize $P_1(t)=[a\cos{t},b\sin{t}]$. Using trigonometry, derive the following expressions for $P_2(t)$ and $P_3(t)$:

\begin{align*}
P_2&=w_2 \left[a^2  \cos{t} - w_1 \sin{t}, b^2  \sin{t} + w_1 \cos{t} \right]\\
P_3&=w_2 \left[a^2  \cos{t} + w_1 \sin{t}, b^2  \sin{t} - w_1 \cos{t}\right]
\end{align*}
where $w_1=\sqrt{c^2 (a + b)^2  \cos^2{t} + 2 a b^3 + b^4}$ and $w_2=-a b/(({c^2} \cos^2{t} + b^2) (a + b))$.

From the above, apply the law of cosines to obtain the following expressions for the $c_i=\cos\theta_i$: 

\begin{align*}
c_1&= \frac{(a^4 + 2 a^3 b - 2 a b^3 - b^4)\cos^2{t} - a^2 b^2 + 2 a b^3 + b^4}{(a + b)^2 (c^2 \cos^2{t} + b^2)} \\
c_2^2&= \frac{2 ((b-a)  \cos^2{t} - b) a b (a-b) w_1  \sin{t} \cos{t} - c^6  \cos^6{t} + w_3 \cos^4{t} +  w_4 }{(a + b)^2 ({c^2}  \cos^2{t} + b^2)^2}\\
c_3^2 &= \frac{2 ((a - b) \cos{t}^2 + b) a b (a - b) w_1 \sin{t}\cos{t} - c^6 \cos^6{t} + w_3 \cos^4{t} + w_4}{(a + b)^2 (c^2 \cos{t}^2 + b^2)^2}
\end{align*}
where $w_3=(a^6 - 2 a^4 b^2 - 4 a^3 b^3 + 7 a^2 b^4 - 2 b^6)$ and $w_4=(4 a^3 b^3 - 5 a^2 b^4 + b^6)  \cos^2{t} + a^2 b^4$. Using $c_3=k-c_1-c_2$ via CAS-assisted theory of resultants eliminate sines and $\sin{t}$ and $\cos{t}$, obtaining the following implicit on $c_1$ and $c_2$:

\begin{equation}
2c_1 c_2(c_1+c_2) - 2(c_1^2 + c_2^2)- 2(k +1)c_1 c_2 + 2k(c_1+c_2) +1- k^2 =0
\label{eq:implicit-pick}
\end{equation}

Intersect the above extrusion with the $c_1+c_2+c_3=k$ plane and use the basis $u,v$, $u=([0,0,1]{\times}[1,1,1])/||.||$, and $v=([1,1,1]{\times}u)/||.||$.

To prove that the above is equal to the locus obtained in the confocal pair, apply the affine transformation to the $P_i$ using $s$ as in \cref{lem:scaling}. Carry out the same steps and obtain the exact same implicit as in \cref{eq:implicit-pick}.
\end{proof}

\begin{figure}
    \centering
    \includegraphics[width=.66\textwidth]{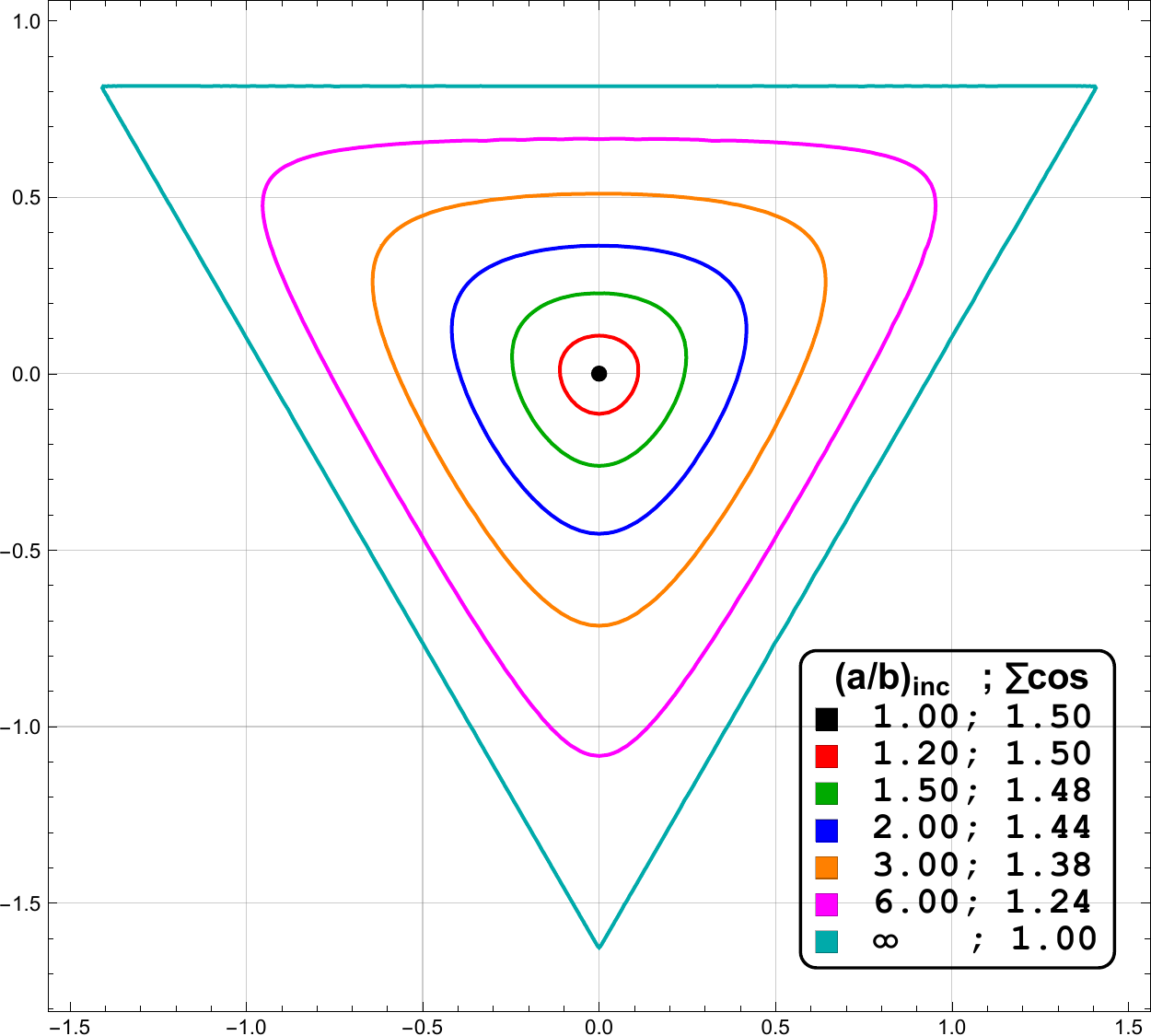}
    \caption{Projections of cosine triples for the incircle family onto the constant-sum-of-cosines plane for various ratios of $a/b$ of the external ellipse is a family of equilateral cubics. When $a/b$ goes to zero (resp. infinity), the locus collapses to a point (resp. tends to an equilateral triangle whose vertices are $2\sqrt{2}/3$ units from its centroid.}
    \label{fig:proj-triples}
\end{figure}

%\begin{figure}
%    \centering
%    \includegraphics[width=\textwidth]{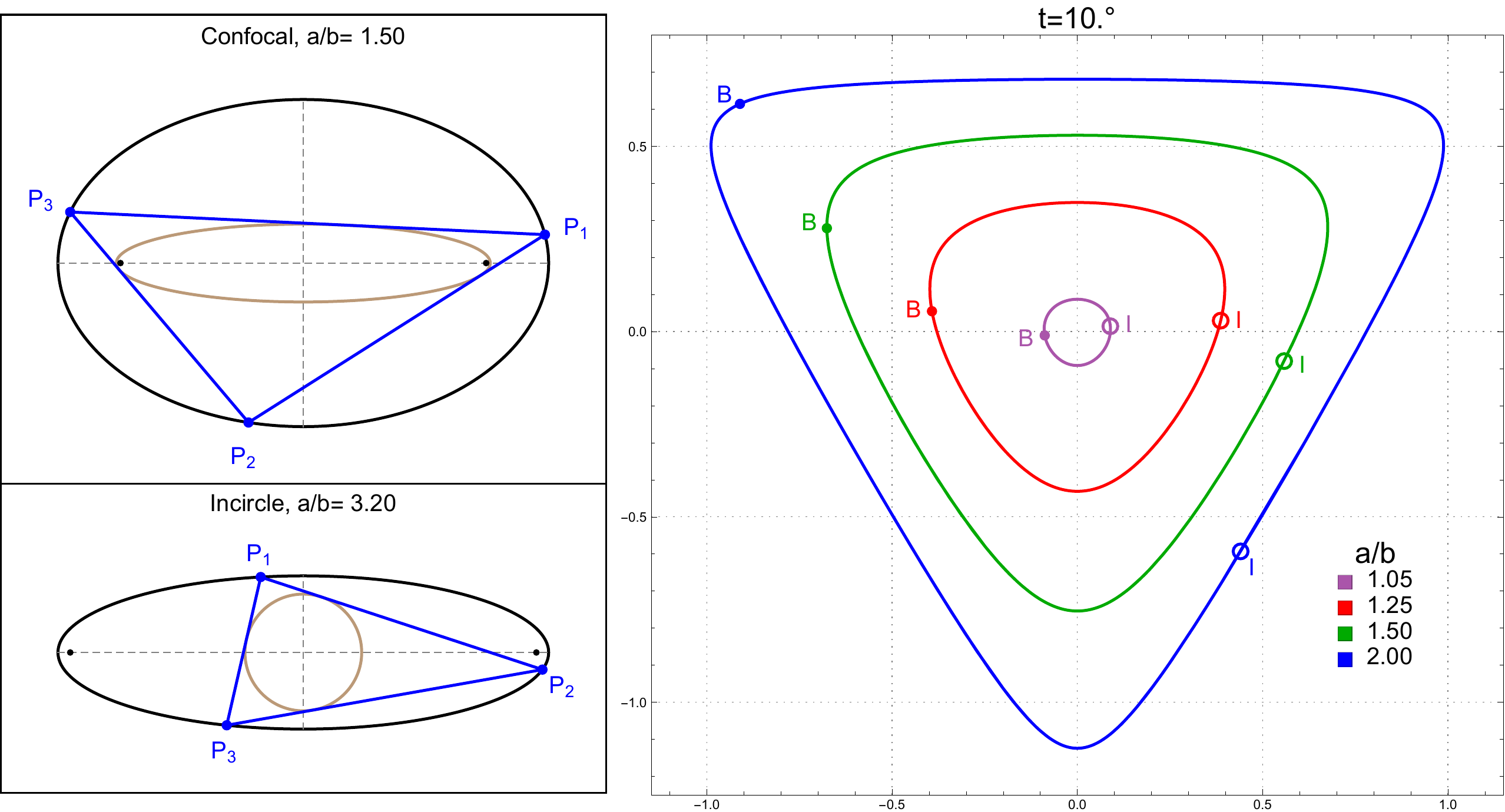}
%    \caption{Picks 2d. \href{https://youtu.be/uwdW95HI-q8}{Video}}
%    \label{fig:picks-2d}
%\end{figure}

The common locus of the incircle and confocal cosine triples is illustrated in \cref{fig:both}(left). 

Straightforward derivation yields the minimal and maximal values $c_{min},c_{max}$ of $\cos\theta_i$ in terms of $a,b$ of the incircle family:

\[ \left\{c_{min}, c_{max}\right\}=\left\{1-\frac{2a^2}{(a+b)^2},\,1-\frac{2b^2}{(a+b)^2}\right\}\]

%\section{Circumcircle and Excentral Product of Cosines}
%\label{sec:prod-cos}
%\input{035_prod_cos}

\section{Circumcircle and Excentrals: Log-Cosine Space}
\label{sec:cos-space-circ-exc}
Let $k'=\prod{c_i}'$ denote the invariant product of cosines in the circumcircle family. Therefore in ``log cosine space'' a planar curve is swept, shown superposed with the locus of incircle-confocal cosines in \cref{fig:superposed}. As before, due to periodicity, all $c_i'$ sweep the same function though out-of-phase. 
 
 %\textcolor{red}{ronaldo: universal measure?}

\begin{figure}
    \centering
    \includegraphics[width=.7\textwidth]{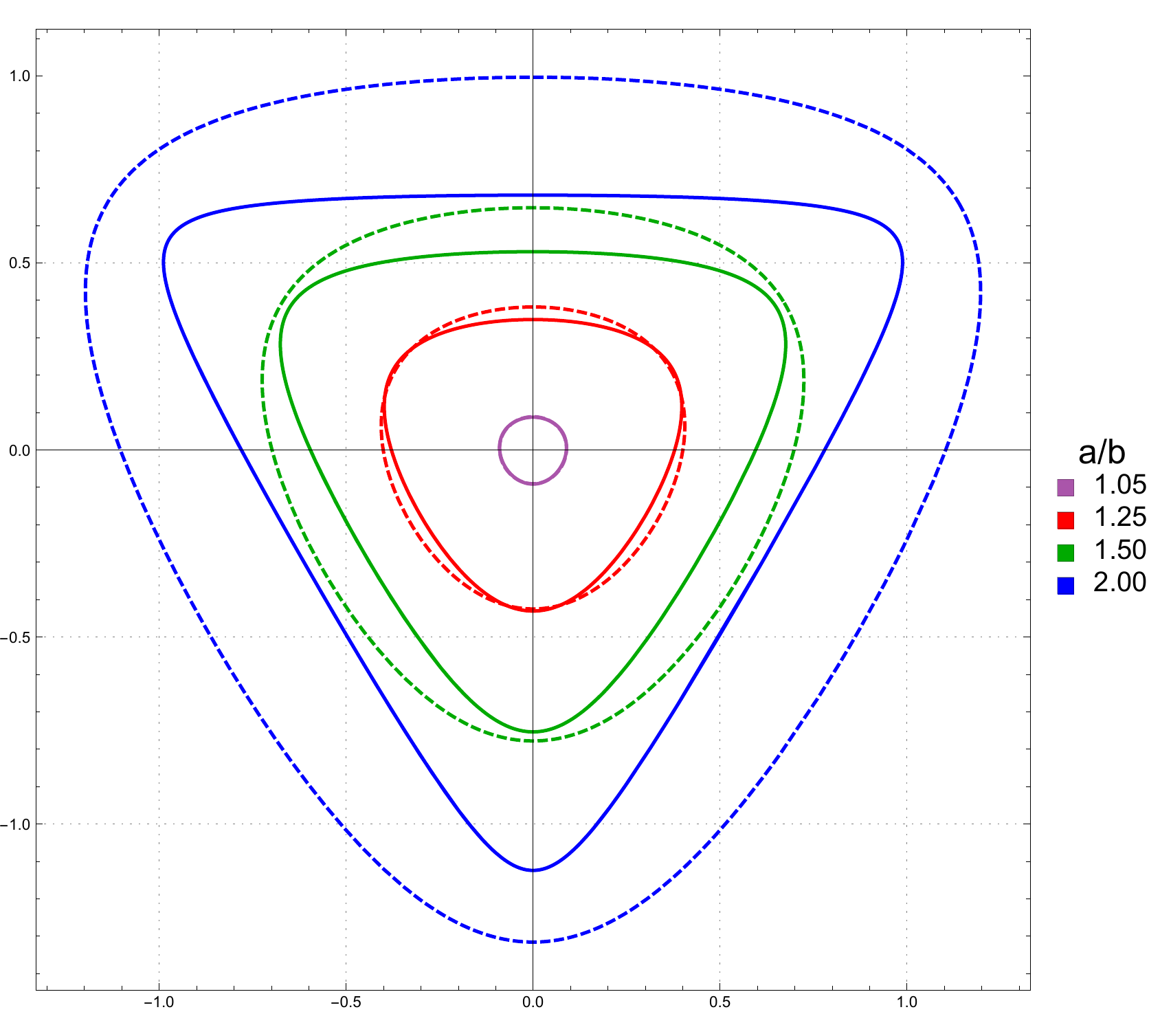}
    \caption{\textbf{Solid curves:} locus of cosine triples over incircle and confocal families. \textbf{Dashed curves:} locus of log-cosines over circumcircle and excentral families. A family of curves is shown for various aspect ratios of the external confocal ellipse or the caustic for the excentral families.}
    \label{fig:superposed}
\end{figure}
 
 The implicit $x y z = \zeta$ where $\zeta$ is a constant is known as the Titeica surface \cite{ferreol2017-titeica}. Referring to \cref{fig:titeica}:
 
 \begin{theorem}
The locus $\Delta'$ of cosine triples over 3-periodics in the circumcircle pair (or its affinely-related excentral family) is the intersection of a sphere with a Titeica surface, i.e.:
\[ \Delta':\;x^2+y^2+z^2+2k'-1=0,\;\;\;x y z - k'=0\]
\label{thm:locus-exc}
\end{theorem}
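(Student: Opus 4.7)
The plan is to combine the classical triangle identity
\[\cos^2\theta_1+\cos^2\theta_2+\cos^2\theta_3+2\cos\theta_1\cos\theta_2\cos\theta_3=1,\]
which holds for any triangle because $\theta_1+\theta_2+\theta_3=\pi$, with the invariance of the product of cosines, $c_1c_2c_3=k'$, established in the preceding Proposition. Writing $(x,y,z)=(\cos\theta_1,\cos\theta_2,\cos\theta_3)$, the product invariance is literally the Titeica equation $xyz-k'=0$, so one of the two defining equations of $\Delta'$ is already in hand.

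The second equation is obtained by substituting $xyz=k'$ into the trigonometric identity, which immediately yields
\[x^2+y^2+z^2=1-2k',\]
i.e.\ the sphere equation $x^2+y^2+z^2+2k'-1=0$. Thus every cosine triple arising from the circumcircle family—and, via \cref{lem:scaling-exc} together with the preceding Proposition, of the $s'$-scaled excentral family as well—satisfies both defining equations, so $\Delta'$ lies on the intersection of the sphere with the Titeica surface. Because both families are acute (visible in \cref{fig:two-families-exc}), the triple is constrained to the positive octant, which picks out the unique component of the real intersection that is actually swept.

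There is no real obstacle here: the proof reduces to two lines of algebra and, unlike \cref{thm:incircle-confocal-space}, requires no resultant computation or CAS elimination. The only small point to watch is the sign in the substitution, namely that $+2xyz=+2k'$ moves across the identity to produce the ``$+2k'-1$'' appearing in the stated sphere equation rather than the opposite sign; once that is confirmed, the result is immediate.
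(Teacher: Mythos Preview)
Your argument is correct and takes a genuinely different, more elegant route than the paper. The paper parametrizes the circumcircle family explicitly, computes each $(c_i')^2$ via the law of cosines, sums to obtain $\sum (c_i')^2=(a^2+ab+b^2)/(a+b)^2$, and then recognizes this as $1-2k'$; for the excentral family it re-applies the $s'$-scaling of \cref{lem:scaling-exc} and repeats the computation. You instead invoke the classical identity $\cos^2\theta_1+\cos^2\theta_2+\cos^2\theta_3+2\cos\theta_1\cos\theta_2\cos\theta_3=1$, valid for \emph{any} triangle, and substitute the already-proved invariant $xyz=k'$. This collapses the whole argument to two lines, needs no parametrization or CAS, and treats the circumcircle and excentral families simultaneously, since the identity is universal and the preceding Proposition already equates their products of cosines. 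What the paper's explicit calculation buys is the closed form of $\sum(c_i')^2$ in terms of the caustic semi-axes, but that is of course recoverable from $1-2k'$ and \cref{eqn:prod-cos}.

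One small remark: as stated, the theorem asserts that $\Delta'$ \emph{is} the intersection, not merely contained in it. Both your argument and the paper's establish only the containment $\Delta'\subset\{x^2+y^2+z^2=1-2k'\}\cap\{xyz=k'\}$; neither verifies that the family actually sweeps the entire (positive-octant component of the) intersection curve. Your comment about acuteness confining the triple to the positive octant is a useful observation, but the surjectivity onto that component is left implicit in both proofs.
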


\begin{proof}
Let $P_i'$ be the vertices of a 3-periodic in the circumcircle family. Parametrize $P_1'(t)=(a+b)\;[u,\sqrt{1-u^2}]$. Using trigonometry, derive the following expressions for $P_2'(t)$ and $P_3'(t)$:

\begin{align*}
P_2'&=\frac{1}{\left( {u}^{2}-1 \right) {a}^{2}-{
b}^{2}u^2}\left[  \left( {b}^{2}u-\sqrt {1-{u}^{2} }w \right) a, \left( \sqrt 
{1-{u}^{2}}{a}^{2}+ w u \right) b
\right]\\
P_3'&=\frac{1}{\left( {u}^{2}-1 \right) {a}^{2}-{
b}^{2}u^2}\left[  \left( {b}^{2}u+w \sqrt {1-{u}^{2}}\right) a, \left( \sqrt {1-{u}^{2}}{a}^{2}-w u
 \right) b]
\right]\\
w&= \sqrt {{a}^{3}
 \left( a+2\,b \right) -{c}^{2} \left( a+b \right) {u}^{2}}
\end{align*}

From the above, apply the law of cosines to obtain the following expressions for the $c_i'=\cos\theta_i'$: 

\begin{align*}
(c_1')^2&= {\frac {{a}^{2}{b}^{2}}{ \left( a+b \right) ^{2} \left(  \left(1- {u}^{
2}  \right) {a}^{2}+{b}^{2}{u}^{2} \right) }}
\\
(c_2')^2&=\frac{1}{2}\,{\frac { \left( {u}^{2}-
1 \right) {a}^{3}-{b}^{3}{u}^{2}+\sqrt {1-{u}^{2}}\,  \left( a-b \right) u w }{ \left( a+b \right)  \left(  \left( 
{u}^{2}-1 \right) {a}^{2}-{b}^{2}{u}^{2} \right) }}
\\
(c_3')^2&= \frac{1}{2} \,{\frac {  \left( {u}^{2}
-1 \right) {a}^{3}-{b}^{3}{u}^{2} -\sqrt {1-{u}^{2}}\,  \left( a-b \right)u w}{ \left( a+b \right)  \left( 
 \left( {u}^{2}-1 \right) {a}^{2}-{b}^{2}{u}^{2} \right) }}
\end{align*}

Therefore,

\[\sum{(c_i')^2}=\frac{a^2+ab+b^2}{(a+b)^2}=1-2k'.\]

Intersect the above sphere with the Titeica surface $c_1 c_2 c_3=k'$. Finally, to prove the above is equal to the locus obtained in the excentral pair, apply the affine transformation to the $P_i'$ using $s'$ as in \cref{lem:scaling-exc}. Carry out the same steps and obtain the exact same two surfaces defining $\Delta'$.
\end{proof}

The common locus of the circumcircle and excentral log-cosine triples is illustrated in \cref{fig:both}(right).

Note one can eliminate $k'$ from \cref{thm:locus-exc} to obtain an implicit for the union of all spherical curves swept by cosine triples over the circumcircle (or excentral) family. This is given by:

\[ 2 x y z + x^2 + y^2 + z^2 = 1 \]

A few spherical loci superposed on the positive octant of the above are shown in \cref{fig:sfiha}.

\begin{figure}
    \centering
    \includegraphics[width=.5\textwidth]{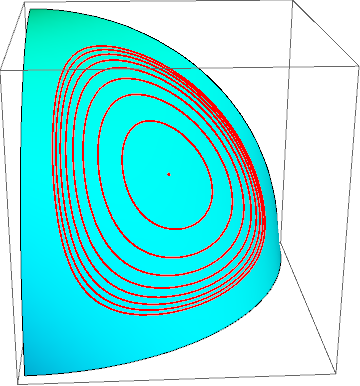}
    \caption{A few spherical loci of cosine triples over the circumcircle/excentral families shown superposed on the surface they sweep.}
    \label{fig:sfiha}
\end{figure}

%Recall the scaling parameter $s'$ defined in \cref{lem:scaling-exc} which sends the circumcircle pair to the excentral pair, whose internal ellipse axes are $a'=s' a$ and $b'=b$ as well as the external ellipse axis $a_c=s'R$ and $b_c=R$.

\begin{figure}
    \centering
    \includegraphics[width=\textwidth]{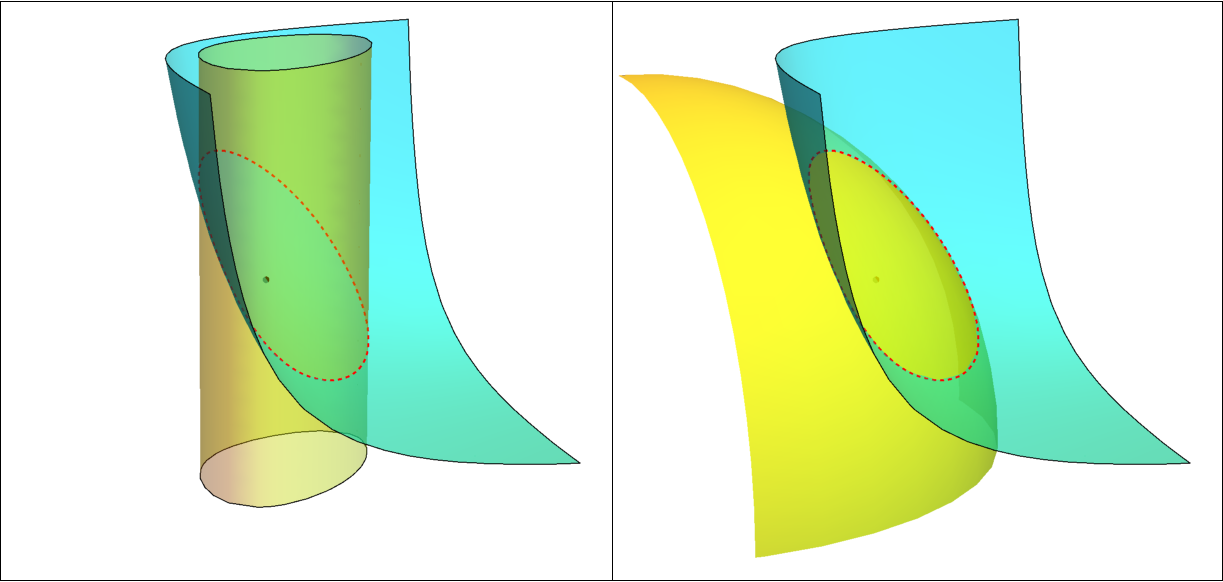}
    \caption{Cosine triples over both circumcircle and excentral families sweep a spherical curve which can be obtained as the intersection of the Titeica surface (light blue) \cite{ferreol2017-titeica} with either (i) a cylindrical surface (left) or (ii) a sphere (right). In either graph a small dot marks the intersection of the Titeica surface with the line from the origin toward $[1,1,1]$.}
    \label{fig:titeica}
\end{figure}

%\begin{corollary}
%\textcolor{red}{is this possible}
%The locus of the logarithm of cosine triples of 3-periodics in the circumcircle (or its affinely-related excentral) family is a planar curve. It given implicitly as:
%\[ f(x,y)=0 \]
%\label{cor:circum-exc-space}
%\end{corollary}

%\begin{proof}
%\end{proof}

%\begin{figure}
%    \centering
%    \includegraphics[width=\textwidth]{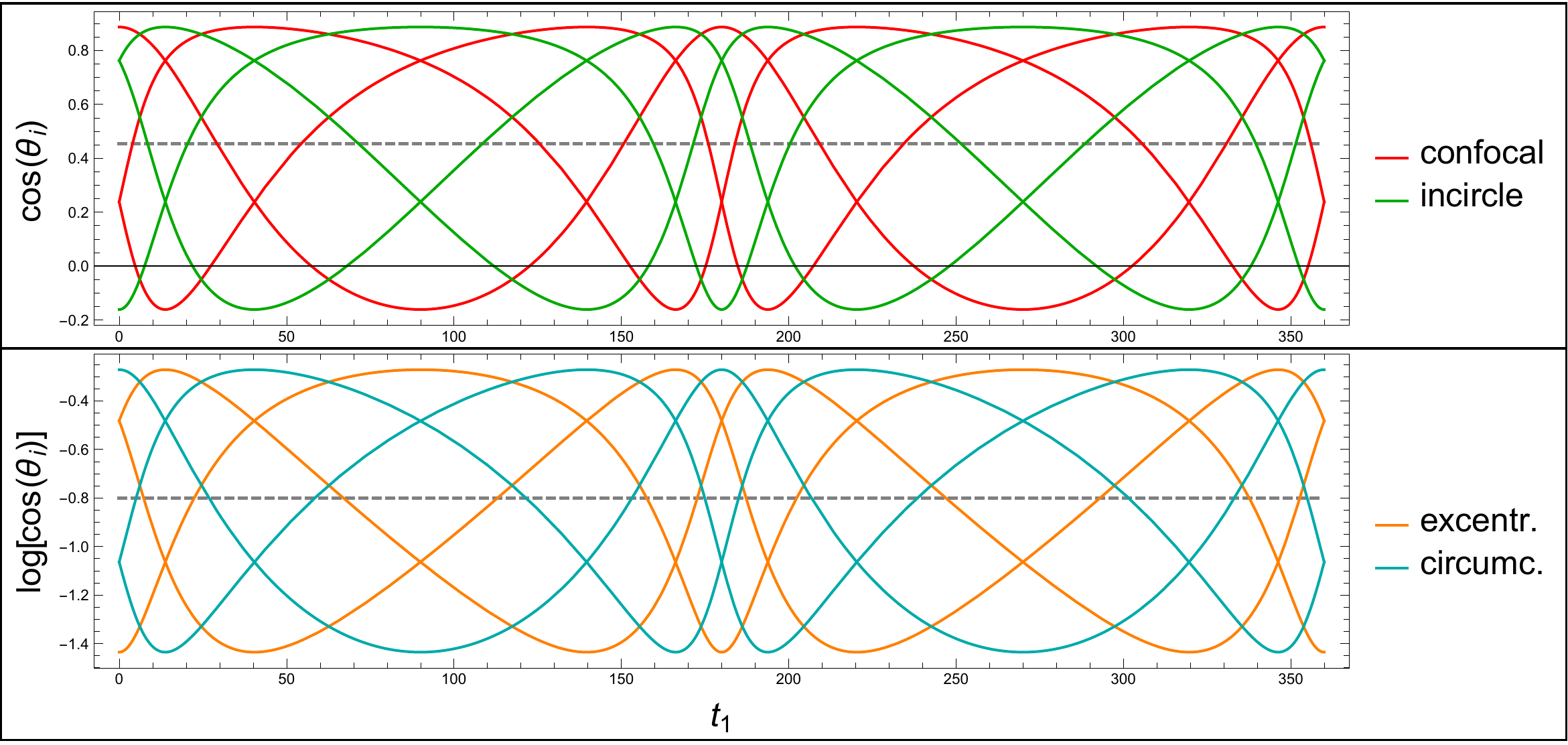}
%    \caption{\textbf{Top:} evolution of cosine triples for confocal and incircle families (red,green) vs $t$ used to parametrize $P_1=[a\cos{t},b\sin{t}]$. Note that sum (and average, dashed grey) are the same. \textbf{Bottom:} the same plot, but now considering the triple of ``log cosines'' in the excentral and incircle families (orange, light blue). Note both curves also produce the same sum and average (dashed grey).}
%    \label{fig:cos-graph}
%\end{figure}

\begin{figure}
    \centering
    \includegraphics[width=\textwidth]{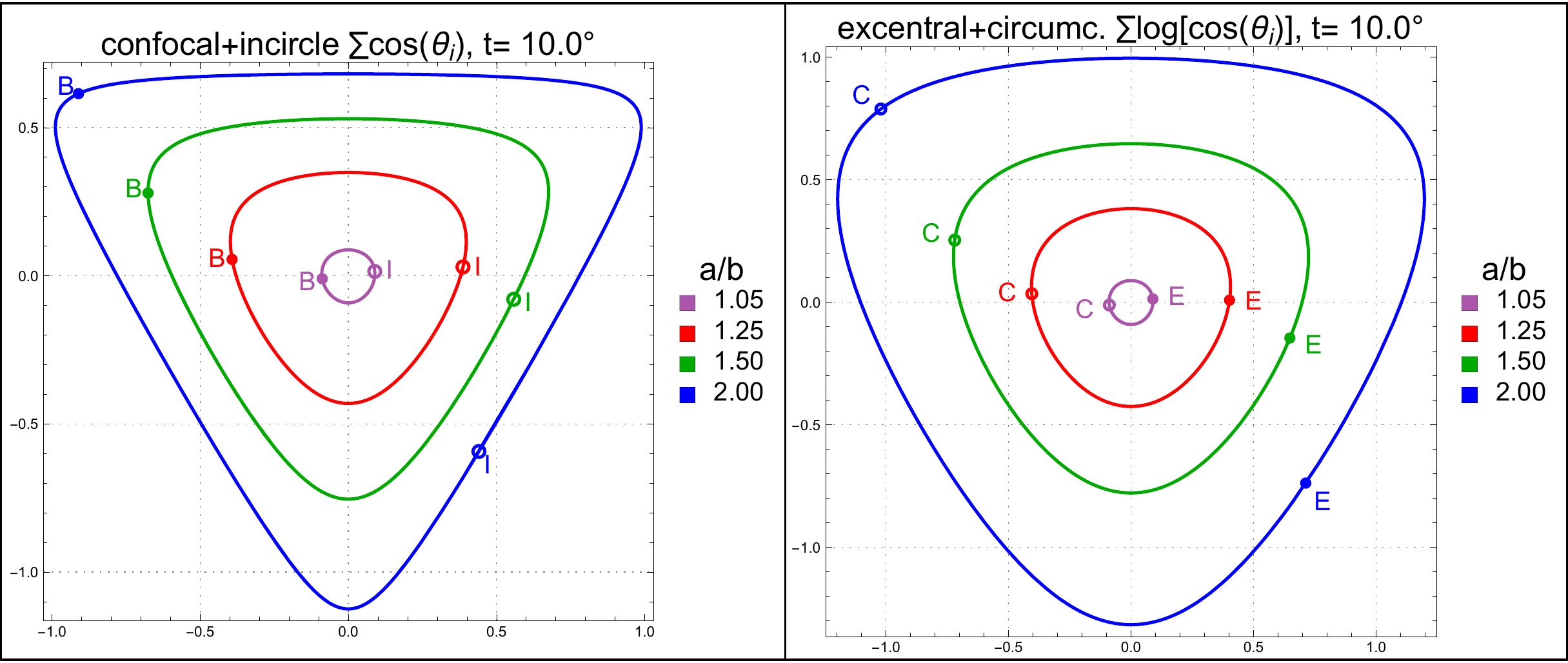}
    \caption{\textbf{Left:} Cosine vector of 3-periodics over the billiard/confocal (B) families and the affinely-related incircle (I) families, projected onto the plane perpendicular to $[1,1,1]$, over several confocal axis ratios $a/b$ of the confocal pair. The key observation is that they sweep identical plectrum-shaped curves, though out-of-phase \href{https://youtu.be/uwdW95HI-q8}{Video}. \textbf{Right:} The same visualization for the log of cosines for the excentral (E) and circumcircle (C) 3-periodic families. As shown, for each $a/b$ of the excentral caustic, both curves swept are identical, rounder, plectrum-shaped curves, where (E) and (C) are out-of-phase.}
    \label{fig:both}
\end{figure}

Straightforward derivation yields the minimal and maximal values $c_{min},c_{max}$ of $\cos\theta_i$ over the circumcircle family:

\[   \left\{c_{min}, c_{max}\right\}=\left\{\frac{b}{R},\,\frac{a}{R}\right\}=\left\{\frac{b}{a+b},\,\frac{a}{a+b}\right\}\]

%\section{Limiting Pedals}
%\label{sec:limiting-pedals}
%\input{050_limiting_pedals}

\section{conclusion}
\label{sec:conclusion}
Animations illustrating some phenomena in this article are listed on Table~\ref{tab:playlist}.

\begin{table}[H]
\small
\begin{tabular}{|c|l|l|}
\hline
id & Title & \textbf{youtu.be/<.>}\\
\hline
01 & {N=3 Poncelet family} &
\href{https://youtu.be/bjHpXVyXXVc}{\texttt{bjHpXVyXXVc}}\\
02 & {N=7 Poncelet family} &
\href{https://youtu.be/kzxf7ZgJ5Hw}{\texttt{kzxf7ZgJ5Hw}}\\
03 & {Affine images of confocal 3-periodics} &
\href{https://youtu.be/aWs29dqY34M}{\texttt{aWs29dqY34M}}\\
04 & {Affine images of confocal 5-periodics} &
\href{https://youtu.be/buhDTsjpvRQ}{\texttt{buhDTsjpvRQ}}\\
05 & {Incircle and Affinely-Related Confocal Families} &
\href{https://youtu.be/CKVoQvErjj4}{\texttt{CKVoQvErjj4}}\\
06 & {Circumcircle and Affinely-Related Excentral Families} &
\href{https://youtu.be/PMqoH4oGt10}{\texttt{PMqoH4oGt10}}\\
07 & {Loci of Cosines for Incircle and Confocal Families} &
\href{https://youtu.be/uwdW95HI-q8}{\texttt{uwdW95HI-q8}}\\
\hline
\end{tabular}
\caption{Animations of some phenomena. The last column is clickable and provides the YouTube code.}
\label{tab:playlist}
\end{table}

A few questions are posed to the reader:

\begin{itemize}
    \item The cosine triples in the incircle/confocal families sweep a plane curve. Recall their $N>3$ counterparts also conserve cosines \cite{akopyan2020-invariants}. In which dimension $d$ will their cosine $N$-tuples live, namely, can $d<N-1$?
    \item Similarly, for $N>3$, will the cosine tuples in the circumcircle/excentral lie on an $(N-1)$-sphere or be even more constrained?
    \item What is the implicit equation for the planar log-cosine curve swept by the excentral and/or circumcircle family?

    \item For the $N>3$ case, will the confocal family (resp.\ excentral) and its affine image with incircle (resp.\ circumcircle) sweep the same curve in cosine (resp.\ log-cosine) space? 
  
\end{itemize}

\section*{Acknowledgements}
\noindent We would like to thank A. Akopyan for pointing out the relationship of some of our results with \cite[Thm 6.4]{akopyan2020-invariants}.

%\appendix

%\section{Jacobi's Elliptic Functions}
%\input{130_app_jacobi}

%\section{Proof to  invariant sum of cosines transformation}
%\input{130_proof_thm_scale}
%\label{app:Proof_Scale}

%\section{Proof to invariant product of cosines transformation}
%\input{140_proof_thm_scale2}
%\label{app:Proof_Scale_Product}

%\section{Table of Symbols}
%\input{120_app_symbols}
%\label{app:symbols}

\bibliographystyle{maa}
\bibliography{references,authors_rgk_v3}

\begin{thebibliography}{10}
\expandafter\ifx\csname urlstyle\endcsname\relax
 \providecommand{\url}[1]{doi:\discretionary{}{}{}#1}\else
 \providecommand{\url}{doi:\discretionary{}{}{}\begingroup
  \urlstyle{rm}\Url}\fi

\bibitem{akopyan2020-invariants}
Akopyan, A., Schwartz, R., Tabachnikov, S. (2020).
\newblock Billiards in ellipses revisited.
\newblock \emph{Eur. J. Math.}
\newblock {d}oi:10.1007/s40879-020-00426-9.

\bibitem{armitage2006}
Armitage, J.~V., Eberlein, W.~F. (2006).
\newblock \emph{Elliptic Functions}.
\newblock London: Cambridge University Press.

\bibitem{bialy2020-invariants}
Bialy, M., Tabachnikov, S. (2020).
\newblock {Dan Reznik's} identities and more.
\newblock \emph{Eur. J. Math.}
\newblock {d}oi:10.1007/s40879-020-00428-7.

\bibitem{bos1987}
Bos, H. J.~M., Kers, C., Oort, F., Raven, D.~W. (1987).
\newblock Poncelet's closure theorem.
\newblock \emph{Expositiones Math.}, 5(4): 289--–364.

\bibitem{centina15}
del Centina, A. (2016).
\newblock {P}oncelet's porism: a long story of renewed discoveries i.
\newblock \emph{Arch. Hist. Exact Sci.}, 70(2): 1--122.

\bibitem{caliz2020-area-product}
Chavez-Caliz, A. (2020).
\newblock More about areas and centers of {Poncelet} polygons.
\newblock \emph{Arnold Math J.}
\newblock {d}oi:10.1007/s40598-020-00154-8.

\bibitem{cieslak2013}
Cieślak, W., Martini, H., Mozgawa, W. (2013).
\newblock On the rotation index of bar billiards and {P}oncelet's porism.
\newblock \emph{Bull. Belg. Math. Soc. Simon Stevin}, 20(2): 287--300.

\bibitem{connes07}
Connes, A., Zagier, D. (2007).
\newblock A property of parallelograms inscribed in ellipses.
\newblock \emph{The American Mathematical Monthly}, 114(10): 909--914.

\bibitem{dragovic11}
Dragovi\'{c}, V., Radnovi\'{c}, M. (2011).
\newblock \emph{Poncelet Porisms and Beyond: Integrable Billiards,
  Hyperelliptic Jacobians and Pencils of Quadrics}.
\newblock Frontiers in Mathematics. Basel: Springer.

\bibitem{ferreol2017-titeica}
Ferréol, R. (2017).
\newblock The {T}iteica surface.
\newblock MathCurve.
\newblock \url{https://mathcurve.com/surfaces.gb/titeica/titeica.shtml}.

\bibitem{flatto2009}
Flatto, L. (2009).
\newblock \emph{Poncelet's theorem}.
\newblock Am. Math. Soc., Providence, RI.
\newblock Chapter 15 by S. Tabachnikov.

\bibitem{garcia2019-incenter}
Garcia, R. (2019).
\newblock Elliptic billiards and ellipses associated to the 3-periodic orbits.
\newblock \emph{American Mathematical Monthly}, 126(06): 491--504.

\bibitem{garcia2020-family-ties}
Garcia, R., Reznik, D. (2021).
\newblock Family ties: Relating {P}oncelet 3-periodics by their properties.
\newblock \emph{J. Croatian Soc. for Geom. \& Gr. (KoG)}, to appear.
\newblock {arXiv}:2012.11270.

\bibitem{garcia2020-new-properties}
Garcia, R., Reznik, D., Koiller, J. (2020).
\newblock New properties of triangular orbits in elliptic billiards.
\newblock \emph{Amer. Math. Monthly}, to appear.
\newblock {arXiv}:2001.08054.

\bibitem{georgiev2012-cayley}
Georgiev, V., Nedyalkova, V. (2012).
\newblock Poncelet’s porism and periodic triangles in ellipse.
\newblock \emph{Dynamat}.
\newblock \url{www.dynamat.oriw.eu/upload_pdf/20121022_153833__0.pdf}.

\bibitem{griffiths1978}
Griffiths, P., Harris, J. (1978).
\newblock On {C}ayley's explicit solution to {P}oncelet's porism.
\newblock \emph{Enseign. Math. (2)}, 24(1-2): 31--40.

\bibitem{johnson1960}
Johnson, R.~A. (1960).
\newblock \emph{Advanced Euclidean Geometry: An Elementary Treatise on the
  Geometry of the Triangle and the Circle}.
\newblock New York, NY: Dover, 2nd ed.
\newblock Editor John W. Young.

\bibitem{kaloshin2018}
Kaloshin, V., Sorrentino, A. (2018).
\newblock On the integrability of {B}irkhoff billiards.
\newblock \emph{Phil. Trans. R. Soc.}, A(376).

\bibitem{koiller2021-spatial}
Koiller, J., Reznik, D., Garcia, R. (2021).
\newblock Average elliptic billiard invariants with spatial integrals.
\newblock arXiv:2102.10899.

\bibitem{levi2007-poncelet-grid}
Levi, M., Tabachnikov, S. (2007).
\newblock The {P}oncelet grid and billiards in ellipses.
\newblock \emph{Am. Math. Monthly}, 114(10): 895--908.

\bibitem{lynch2019}
Lynch, P. (2019).
\newblock Integrable elliptic billiards and ballyards.
\newblock \emph{Eur. J. of Phys.}, 41(1).

\bibitem{reznik2020-intelligencer}
Reznik, D., Garcia, R., Koiller, J. (2020).
\newblock Can the elliptic billiard still surprise us?
\newblock \emph{Math Intelligencer}, 42: 6--17.

\bibitem{reznik2021-fifty}
Reznik, D., Garcia, R., Koiller, J. (2021).
\newblock Fifty new invariants of {N}-periodics in the elliptic billiard.
\newblock \emph{Arnold Math. J.}, 7: 341--355.

\bibitem{rozikov2018}
Rozikov, U.~A. (2018).
\newblock \emph{An Introduction To Mathematical Billiards}.
\newblock Hackensack, NJ: World Scientific Publishing Co.

\bibitem{schwartz2007-grid}
Schwartz, R.~E. (2007).
\newblock The {P}oncelet grid.
\newblock \emph{Advances in Geometry}, 7(2): 157--175.

\bibitem{stachel2021-grid}
Stachel, H. (2021).
\newblock The geometry of billiards in ellipses and their {P}oncelet grids.
\newblock {arXiv}:2105.03362.

\bibitem{stachel2021-iso}
Stachel, H. (2021).
\newblock Isometric billiards in ellipses and focal billiards in ellipsoids.
\newblock {arXiv}:2105.05295.

\bibitem{stachel2021-motion}
Stachel, H. (2021).
\newblock On the motion of billiards in ellipses.
\newblock {arXiv}:2105.03624.

\bibitem{sergei91}
Tabachnikov, S. (2005).
\newblock \emph{Geometry and Billiards}, vol.~30 of \emph{Student Mathematical
  Library}.
\newblock Providence, RI: American Mathematical Society.
\newblock Mathematics Advanced Study Semesters, University Park, PA.

\bibitem{mw}
Weisstein, E. (2019).
\newblock Mathworld.
\newblock \emph{MathWorld--A Wolfram Web Resource}.
\newblock \url{mathworld.wolfram.com}.

\end{thebibliography}

\end{document}